\def\<{\left<}
\def\>{\right>}
\def\({\left(}
\def\){\right)}
\def\9{\infty}
\def\R{\mathbb R}
\def\E{\mathbb E}
\newtheorem{theorem}{Theorem}
\newtheorem{corollary}[theorem]{Corollary}
\newtheorem{definition}[theorem]{Definition}
\newtheorem{lemma}[theorem]{Lemma}
\newtheorem{proposition}[theorem]{Proposition}
\newtheorem{remark}[theorem]{Remark}
\newcommand{\beqnar}{\begin{eqnarray*}}
\newcommand{\eeqnar}{\end{eqnarray*}}
\newcommand{\ba}{\begin{array}}
\newcommand{\ea}{\end{array}}
\begin{document}

\title{Probabilistic representation for solutions of a porous media type 
equation  with Neumann boundary condition: the case of the half-line}
\author{ Ioana Ciotir (1) and Francesco Russo (2) }
\date{April 11th 2013}
\maketitle

\thispagestyle{myheadings}

\textbf{Summary:} The purpose of this paper consists in proposing a generalized solution
for a porous media type equation on a half-line with Neumann boundary
condition and prove a probabilistic representation of this solution in terms
of an associated microscopic diffusion. The main idea is to construct a
stochastic differential equation with reflection which has a solution in law
and whose marginal law densities provide the unique solution of the porous media type
equation.

\textbf{Key words}: stochastic differential equations, reflection, 
porous media type equation, probabilistic representation.

\textbf{2000 AMS-classification}: 60H10, 60H30, 60H10, 60G46, 35C99.

\begin{itemize}
\item[(1)] Ioana Ciotir, University A1.I. Cuza, Ro--6600 Iasi, Romania and 
\newline
University of Neuch\^atel, Switzerland

\item[(2)] Francesco Russo, ENSTA ParisTech, Unit\'e de Math\'ematiques
appliqu\'ees, France.
\end{itemize}

\vfill \eject

\section{Introduction and preliminaries}

\bigskip

In this work we focus on
  a porous media type equation given by%
\begin{equation}
\left\{ 
\begin{array}{ll}
\partial _{t}u(t,x) \in \frac{1}{2}\partial _{xx}^{2}\beta \left( u\right) (t,x)
& :(t,x)\in \left( 0,T\right] \times \mathbb{R}_{+},\medskip \\ 
u(0,x)=u_{0}(x) & :x\in \mathbb{R}_{+},\medskip \\ 
\partial _{x}\left( \beta \left( u\right) \right) \left( t,0\right) =0, & 
:t\in \left( 0,T\right]. 
\end{array}
\right.  \label{equ-half}
\end{equation}
The natural analytical concept of (weak) solution of \eqref{equ-half}
is given in Definition \ref{DWeak} and it involves the restriction of 
the derivative of $\beta(u)$ on the boundary. 
We introduce here  a new notion of solution that we call 
{\it  generalized  solution}
for \eqref{equ-half}, which do not require, a priori, the existence of
distributional  derivatives for $\beta(u)$.

Under some minimal conditions, we will first concentrate on uniqueness of
the generalized solutions (in a large class) and existence of a weak solution
(smaller class).
In particular, we will include the case when
$\beta$ is possibly discontinuous.
 Moreover   we  are interested in its 
probabilistic representation through the marginal laws of a stochastic process.

We formulate now some assumptions.

\bigskip

\textbf{Assumption 1}

\begin{description}
\item{i)} $u_{0}\in L^{1}\left( \mathbb{R}_{+}\right) \cap L^{\infty }\left( 
\mathbb{R}_{+}\right) $ is an initial probability density;

\item{ii)} $\beta :\mathbb{R}_+ \rightarrow \mathbb{R}_+$ is a monotone 
increasing function
with $\beta \left( 0\right) =0.$

\item{iii)} There is a constant $c$ such that $\left\vert \beta \left( u\right)
\right\vert \leq cu.$
\end{description}

\bigskip


With $\beta $ we naturally associate a maximal monotone graph still denoted by
the same letter $\beta :\mathbb{R}_+ \rightarrow 2^{\mathbb{R}_+}$, by filling
the gaps, i.e. by identifying $\beta (x)$ with the interval $[\beta
(x-),\beta (x+)]$. We consider now $\Phi :\mathbb{R}_+^{\ast }\rightarrow 
\mathbb{R}_{+}$ such that $\beta (u)=\Phi ^{2}(u) u,  u\neq 0$. 
Again we associate naturally $\Phi$ with the non-negative graph 
(still denoted by $\Phi$) 
 $ u \mapsto \sqrt{\frac{\beta(u)}{u}}$.
Finally we extend the graph $\Phi$ it to $\mathbb{R}$, defining
$\Phi(0): = [\liminf_{x\rightarrow 0+}\Phi (x),\limsup_{x\rightarrow 0+}\Phi (x)]$.
Of course, if $\beta$ is continuous, the first line of 
\eqref{equ-half} can be replaced by the most natural equality
$\partial _{t}u(t,x) = \frac{1}{2}\partial _{xx}^{2}\beta \left( u\right) (t,x)$.

\bigskip

\begin{definition} \label{DND}
\begin{itemize}
\item[i)] We say that $\beta $ is \textbf{non-degenerate} if there is a
constant $c_{0}>0$ such that $\Phi \geq c_{0},$ for every $x > 0$.

\item[ii)] We say that $\beta $ is degenerate if $\underset{u\rightarrow 0_+}{%
\lim }\Phi \left( u\right) =0.$

\item[iii)] We say that $\beta $ is strictly increasing after some zero $%
u_{c}$ if there is $u_{c}\geq 0$ such that $\beta \left\vert _{\left[ 0,u_{c}%
\right[ }\right. =0$ and $\beta \left\vert _{\left[ u_{c},\infty \right[
}\right. $ is strictly increasing.
\end{itemize}
\end{definition}

\bigskip


Often also the Assumption 2 below will be in force.

\textbf{Assumption 2.} We suppose that one of the following properties is
verified.

\begin{description}
\item{i)} $\beta $ is non-degenerate.

\item{ii)} $\beta $ is degenerate and there is a discrete ordered set of elements $(\{e_k\}$ of  $\R_+ \cup \{ +\infty \}$) so 
that $u_0$ has locally bounded variation on $)e_k, e_{k+1}($.
\item{iii)} $\beta $ is strictly increasing after some zero $u_{c}$ 
(in particular
it is degenerate).
\end{description}

\bigskip


As we anticipated,  the idea is to construct a stochastic process $Y$ such that 
the  (marginal) law of $Y_t$
has a density given by $u(t,\cdot )$ for any $t \in [0,T]$. 
We look for $Y$ as
being a solution (in law) of the stochastic differential equation
 with reflection%
\begin{equation}
\left\{ 
\begin{array}{l}
dX_{t}=\Phi \left( u\left( t,X_{t}\right) \right) dB_{t}+dK_{t},  \\ 
K\ \text{increasing\ process}\int_{0}^{T}Y_{s}dK_{s}=0, \\ 
u\left( s,.\right) =\text{law density of }Y_{t},%
\end{array}%
\right.  \label{SDE}
\end{equation}%
 which has a weak
solution $X$ whose law density is the unique solution of the (\ref{equ-half}).

As far as our knowledge is concerned, this paper could be the first 
one studying
the probabilistic representation of a non-linear partial differential 
equation (PDE)
 with Neumann boundary conditions on some domain, through a reflected non-linear diffusion. 

The problem of probabilistic representation related to a PDE, related
to a class ${\mathcal A}$ of solutions,
 is the following.
For each of $ u \in {\mathcal A}$,
there exists 
a stochastic process, solving some form of stochastic differential equation
whose coefficients involve the law of the process (McKean-Vlasov type),
whose marginal laws are given by $u$. 
Solutions of those stochastic differential equations are also called non-linear 
diffusions. The PDE is intended
as a non-linear forward Kolmogorov's equation 
corresponding to the non-linear diffusion.

The paper provides a bridge between two big areas of stochastic analysis:
 stochastic differential equations with reflection on some domain, non-linear diffusions on the whole line.
As we will see the literature is rich of contributions in both topics,
but in principle no one connects them.

\begin{enumerate}
\item \textbf{Non-linear diffusion problems} \\
There are several contributions to the study of  equations stated in the first line of (\ref{equ-half})
but on the whole line or even on $\R^d$. That equation, which  will be precisely stated in
(\ref{equ-all}), was first investigated  by \cite{BeBrC75} for existence, \cite{BrC79} for uniqueness and \cite{BeC81}
 for continuous dependence on coefficients. \\
The physical interpretation of the probabilistic representation is the 
following. 
The singular non-linear diffusion equation \eqref{equ-all} describes a \textit{macroscopic} phenomenon for which the probabilistic 
representation tries to give a \textit{microscopic} probabilistic interpretation via a non-linear stochastic diffusion
 equation modeling the evolution of a single point on the layer.\\
To our knowledge, the first author who considered a probabilistic representation for the solution of
 a non-linear deterministic PDE (on the whole line),
 was McKean \cite{mckean}, especially in relation to the so called
 propagation of chaos. He supposed to have smooth coefficients. 
After that, the literature grew and nowadays there is a vast amount of contributions to the subject,
 particularly when the non-linearity $\beta(u)$ appears inside  the first order part, as e.g. in Burgers equations
 (see for instance the survey papers \cite{graham} and \cite{sznit}).\\
A probabilistic interpretation of \eqref{equ-all} when $\beta(u)=\vert u\vert u^{m-1}$, for $ m>1 $ was provided for instance in 
\cite{vallois} in which probabilistic representations of the
 Barenblatt solutions and of a large class of solutions were given.
Later, when $\Phi$ is of  class $C^3$, Lipschitz, $\beta$ is non-degenerate
and $u_0$ is smooth enough,
\cite{jourdainmeleard} provided also strong solutions 
to the probabilistic representation problem,
see more precisely Remark \ref{R12}. 
In particular,  the probabilistic representation of the porous media on $\mathbb{R}$ was studied in the case of 
irregular coefficients in   \cite{BRR1, BRR2} 
with refinements in \cite{BR, BCR2, BCR3}. In particular \cite{BRR1} represented when $\beta$ is non-degenerated
all the solutions in the sense of distributions under Assumption 1. 
Moreover also the uniqueness of the corresponding 
non-linear diffusions was established.
When $\beta$ is degenerate, under Assumptions 1. and 2. ii) or 2. iii), \cite{BRR2} has provided again
probabilistic representations, but not uniqueness of those. Some improvements also appeared
in   \cite{BCR2, BCR3}, at least when $\Phi$ is continuous.
 \cite{BCR2} provides probabilistic representation
of the Barenblatt solutions when $\beta(u) = u^m, 
\frac{3}{5} < m < 1$, i.e. in the case of fast diffusions.

\item \textbf{Stochastic differential equations with reflection}. There is
a vast literature in the subject, in the one-dimensional case, and in the
multidimensional case as well. It is for us impossible to quote all those.
In the half-line case $\mathbb{R}_+$, such an equation can be formulated as
follows: 
\begin{equation*}
dX_t = \sigma(t, X_t) dW_t + b(t,X_t) dt + dK_t,
\end{equation*}
where $\sigma, b:[0,T] \times \mathbb{R}$ are Borel functions and $K$ is an
increasing process such that $\int_0^T X_s dK_s = 0$. The solution is the
couple $(X,K)$. When $\sigma$ and $b$ are Lipschitz the theory is
well-developed, see for instance \cite{lionssznitman},  \cite{cepa} and \cite{slominski}  (and references therein).
All the three papers
have treated the case when the coefficients are time-independent.
 Elements related to the time-dependent case
appear for instance in \cite{chaleyat} and in  \cite{revuz-yor},
 chapter   IX, Exercise (2.14), page
385. \\ 
When the coefficients are non-Lipschitz, but are Lipschitz with a logarithmic correction and the domain is a half-line \cite{BOWANG},
proved recently  the existence of a strong solution.

As far as weak (in law) solutions of SDEs with reflection, the pioneering work is
\cite{stroockreflection}, which solves some submartingale problem related to 
stochastic differential equations with reflection under relatively general
conditions on the (even time-dependent) coefficients.
An interesting  work is also \cite{pardouxwilliams}  which constructed solutions of
symmetric (time-homogeneous) diffusions with reflection on a domain of $\R^n$,
via Dirichlet forms.

Reflected diffusions  are naturally candidates  for the probabilistic representation
of  solutions of (linear) PDE with Neumann boundary condition in the following sense.
Given a   specific solution $u$ to a Fokker-Planck type PDE, with Neumann boundary conditions,
a process $X$ represents it probabilistically if the marginal laws of $X$
are solutions of  the   PDE. As far as we know, even in this linear case, 
the point is not clear in the literature. For instance, by applying some It\^o formula type,
it is possible to show that solutions of SDEs (or martingale problems)
solve a PDE with boundary condition, in some sense. However
to show that a given solution to a Fokker-Planck PDE can be represented through a 
process is rarely explained. This is related to the
study of uniqueness of the mentioned PDE, see for instance \cite{BRR1},
 Theorem 3.8,
for an equation in the whole line.\\
We conclude this discussion about probabilistic representation of linear PDEs mentioning
\cite{BarbuNeumann}, which gives a representation  of the solution
of an elliptic problem with unbounded monotone drift in term of the invariant measure
of a reflection diffusion equation considered by \cite{cepa}.
\end{enumerate}

\bigskip
We are aware that the  Neumann problem on
the half-line constitutes somehow a toy model, however
at our knowledge, there are no results in the literature
about well-statement of that problem.
If $x$ varies in a bounded domain (for instance a compact interval), there are some contributions
at least in the case when  $\beta(u) = u ^m, 0 < m < 1$, which constitutes
the case of the classical porous media equation.
Given an integrable initial condition $u_0$, \cite{alikakos}, through the techniques of maximal accretivity, see e.g. \cite{showalter},
\cite{barbu10}, a $C^0$-type solution (or mild solution), see Chapter IV.8 of
\cite{showalter}. The technique consists in showing that the elliptic
corresponding operator is m-accretive, whose step was performed
by \cite{brezisstrauss}. In Corollary 3.5, \cite{alikakos} shows that 
(when $u_0 \in L^\infty$), that the solution is even classical and it is therefore
a weak solution in  the sense of 
Definition \ref{DWeak}, adapted to the case when an interval replaces the real line.


The paper is organized as follows. After the introduction above, 
at Section \ref{SPrelim} we introduce the basic definition of
solutions and some notations.
At Section \ref{S2}, we discuss existence and uniqueness
of \eqref{equ-half} and we remark that the solutions 
have some minimal regularity properties.
Section \ref{S3} is devoted to the existence of the probabilistic 
representation in the form of a solution to  a non-linear 
(in the sense that the marginal densities appear
in the coefficients)
stochastic differential equation, with reflection.
The Appendix is devoted to the equivalence of weak and generalized
solutions under some minimal regularity conditions.
\bigskip

\section{Preliminaries}

\label{SPrelim}

\bigskip Let $I$ be a real interval.
Given a function $\varphi: [0,T] \times I \rightarrow \R$,
$(t,x) \mapsto \varphi(t,x)$,
we denote (if it exists), by $\varphi'$ (resp. $\varphi''$)
the partial derivative  $\partial_x \varphi$ (resp. second partial derivative  $\partial^2_{xx} \varphi$)   
with respect to the second argument,
 defined again on $[0,T] \times I$.
If $I$ is closed, then the derivatives are defined as continuous extensions
from $[0,T] \times {\rm Int} I$.

In this paper $C_{0}^{\infty }\left( \mathbb{R}_{+}\right)$ will
denote the set of functions $\varphi: \mathbb{R}_+ \rightarrow \mathbb{R}$
which are restrictions of smooth functions with compact support defined on $%
\mathbb{R}$. 
We denote by $W^{1,1}_{\rm loc}(\R_+)$  the space of 
 absolutely continuous functions $f: \R_+ \rightarrow \R$
such that  for every compact subset $K$ of $\R_+$
$\int_{K} \vert f'(x) \vert dx $ is finite.
 Of course for any $f \in W^{1,1}_{\rm loc}(\R_+)$ and every compact $K$
of $\R$ we have $\int_{K} \vert f(x) \vert dx < \infty $.

If $I = \R$ or $\R_+$, we denote a bit abusively by $ W^{1,1}_{\rm loc} 
([0,T] \times I)$
the set of $f: [0,T] \times \R \rightarrow I$ such that
for almost all $t$, we have
$f(t, \cdot)  \in W^{1,1}_{\rm loc}(I)  $ and 
for $\int_{[0,T] \times K} \vert \partial_x f(t, x)  \vert < \infty,$
for every compact $K$ of $I$.

\begin{definition}
\label{def1} A function $u\in L^{1}\left( \left[ 0,T\right] \times \mathbb{R}%
_{+}\right) $ is called  \textbf{generalized solution} for equation (\ref{equ-half}) if, for any $\varphi \in C_{0}^{\infty }\left( \mathbb{R}%
_{+}\right) $ such that $\varphi ^{\prime }\left( 0\right) =0$, we have 
\begin{equation}
\int_{0}^{\infty }\varphi \left( x\right) u\left( t,x\right)
dx=\int_{0}^{\infty }\varphi \left( x\right) u_{0}\left( x\right) dx+\frac{1%
}{2}\int_{0}^{t}\int_{0}^{\infty }\varphi ^{\prime \prime }\left( x\right)
\eta _{u}\left( s,x\right) dxds,  \label{def-half}
\end{equation}%
where $\eta _{u}:\left[ 0,T\right] \times \mathbb{R}_{+}\rightarrow \mathbb{R%
}$, $\eta _{u}\in L^{1}\left( \left[ 0,T\right] \times \mathbb{R}_{+}\right) 
$ is such that 
\begin{equation*}
\eta _{u}\left( t,x\right) \in \beta \left( u\left( t,x\right) \right)
,\quad dt\otimes dx-a.e.~\left( t,x\right) \in \left[ 0,T\right] \times 
\mathbb{R}_{+}.
\end{equation*}
\end{definition}

We remark a generalized solution in the same spirit, for porous media
equations but with Dirichlet boundary conditions was given in [\cite{barbu10}
page 226]). 
Moreover, in our case the test function is time independent. 

\bigskip

\begin{remark} \label{R3bis}
The formal justification of  previous formula comes out from the following
observation. Suppose that $\left(u,\eta _{u}\right) $ is a smooth solution
to (\ref{equ-half}) and $u_{0}$ is continuous. Then, for every $x\in 
\mathbb{R}_{+}$, we have
\begin{equation*}
u(t,x)=u_{0}(x)+\frac{1}{2}\int_{0}^{t}\eta _{u}^{\prime \prime }\left(
s,x\right) ds.
\end{equation*}%
Let $\varphi $ be a test function from $C_{0}^{\infty }\left( \mathbb{R}%
_{+}\right) $ such that $\varphi ^{\prime }\left( 0\right) =0.$ In this
particular situation we have%
\begin{eqnarray*}
\int_{0}^{\infty }\varphi \left( x\right) u\left( t,x\right) dx
&=&\int_{0}^{\infty }\varphi \left( x\right) u_{0}\left( x\right) dx+\frac{1%
}{2}\int_{0}^{t}\int_{0}^{\infty }\varphi \left( x\right) \eta _{u}^{\prime
\prime }\left( s,x\right) dxds \\
&=&\int_{0}^{\infty }\varphi \left( x\right) u_{0}\left( x\right) dx+\frac{1%
}{2}\int_{0}^{t}\varphi \left( 0\right) \eta _{u}^{\prime }\left( s,0\right)
ds \\
&&\quad \quad \quad \quad \quad \quad \quad -\frac{1}{2}\int_{0}^{t}%
\int_{0}^{\infty }\varphi ^{\prime }\left( x\right) \eta _{u}^{\prime
}\left( s,x\right) dxds \\
&=&\int_{0}^{\infty }\varphi \left( x\right) u_{0}\left( x\right) dx+\frac{1%
}{2}\int_{0}^{t}\varphi \left( 0\right) \eta _{u}^{\prime }\left( s,0\right)
ds \\
&&+\frac{1}{2}\int_{0}^{t}\int_{0}^{\infty }\varphi ^{\prime \prime }\left(
x\right) \eta _{u}\left( s,x\right) dxds-\frac{1}{2} 
\underbrace{\int_{0}^{t}\varphi
^{\prime }\left( 0\right) \eta _{u}\left( s,0\right) ds}_{= 0},
\end{eqnarray*}%
which constitutes indeed \eqref{def-half}. 
\end{remark}
As we mentioned in the introduction, the natural analytical concept
of {\bf solution} should involve the first spatial  derivative at the boundary.\\

We consider a couple $\left( u,\eta _{u}\right) $ satisfying  Definition 
\ref{def1} and suppose that   $\eta_u \in W_{loc}^{1,1}([0,T] \times 
\mathbb{R}_{+})$ in agreement with the notations of Section \ref{SPrelim}.
In particular 
  for every compact $ K \in \mathbb{R} $ we have  
 \begin{equation} \label{eeweak}
 \int_{\left[ 0,T\right] \times K}\left\vert \eta _{u}^{\prime }\left(
 s,x\right) \right\vert dsdx<\infty .
 \end{equation}

\begin{definition} \label{DWeak}
The couple $\left( u,\eta _{u}\right) $ is said to be a weak solution of 
\eqref{equ-half} 
if for every $ \varphi \in C_{0}^{\infty}(\mathbb{R}_{+}) $ we have 
\begin{equation} \label{weak}
\int_{0}^{\infty }\varphi \left( x\right) u\left( t,x\right)
dx=\int_{0}^{\infty }\varphi \left( x\right) u_{0}\left( x\right)
dx-\int_{0}^{t}\int_{0}^{\infty }\varphi ^{\prime }\left( x\right) \eta
_{u}^{\prime }\left( s,x\right) dxds.
\end{equation}
\end{definition}
\begin{proposition} \label{PWeakDistri}
Let $\left( u,\eta _{u}\right) $ be a couple such that 
$\eta_{u}(t,x) \in \beta(u(t,x)) $ and
 $ \eta_{u}
\in  W^{1,1}_{loc}([0,T] \times \mathbb{R}_{+}) $.
Then $ u $ is a generalized solution if and only if it is a weak solution in the sense of Definition \ref{DWeak}.
\end{proposition}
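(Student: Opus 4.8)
The plan is to use a single spatial integration by parts as the bridge between the two formulations; this step is legitimate precisely because of the hypothesis $\eta_{u}(s,\cdot)\in W^{1,1}_{\rm loc}(\R_+)$ for almost every $s$. For any $\varphi\in C_{0}^{\infty}(\R_+)$ I would first record the identity
\begin{equation*}
\int_{0}^{\infty}\varphi''(x)\,\eta_{u}(s,x)\,dx=-\varphi'(0)\,\eta_{u}(s,0)-\int_{0}^{\infty}\varphi'(x)\,\eta_{u}'(s,x)\,dx,
\end{equation*}
call it $(\star)$. The term at $+\infty$ vanishes because $\varphi$ has compact support, the integral on the right is finite by \eqref{eeweak}, and the trace $\eta_{u}(s,0)$ is meaningful since a function in $W^{1,1}_{\rm loc}(\R_+)$ is absolutely continuous up to the endpoint $0$ and hence continuous there. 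Everything then reduces to tracking the boundary contribution $\varphi'(0)\eta_{u}(s,0)$.

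The implication ``weak $\Rightarrow$ generalized'' is the easy one and needs no approximation. Given a weak solution, I would restrict \eqref{weak} to the test functions satisfying $\varphi'(0)=0$; for those the boundary term in $(\star)$ disappears, so $\int_{0}^{\infty}\varphi''\eta_{u}\,dx=-\int_{0}^{\infty}\varphi'\eta_{u}'\,dx$, and \eqref{weak} takes the form \eqref{def-half}.

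The substantive direction is ``generalized $\Rightarrow$ weak'', where the class of admissible test functions must be enlarged from $\{\varphi'(0)=0\}$ to all of $C_{0}^{\infty}(\R_+)$. For $\varphi$ with $\varphi'(0)=0$ the identity $(\star)$ turns \eqref{def-half} back into \eqref{weak}, so \eqref{weak} already holds on that subclass. To reach an arbitrary $\varphi$ I would fix a rescaled corrector $g_{\eps}(x)=\eps\,h(x/\eps)$ with $h\in C_{0}^{\infty}(\R_+)$ and $h'(0)=1$, so that $g_{\eps}'(0)=1$ while $\norm{g_{\eps}}_{\infty}\to 0$; then $\varphi_{\eps}:=\varphi-\varphi'(0)g_{\eps}$ has $\varphi_{\eps}'(0)=0$ and $\varphi_{\eps}\to\varphi$ uniformly. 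Since $u(t,\cdot),u_{0}\in L^{1}(\R_+)$, the terms involving $u(t,\cdot)$ and $u_{0}$ pass to the limit. Writing $\varphi_{\eps}''=\varphi''-\varphi'(0)g_{\eps}''$ and applying \eqref{def-half} to $\varphi_{\eps}$, the only delicate term is $\varphi'(0)\int_{0}^{\infty}g_{\eps}''(x)\eta_{u}(s,x)\,dx$; after the change of variables $y=x/\eps$ it equals $\varphi'(0)\int_{0}^{\infty}h''(y)\eta_{u}(s,\eps y)\,dy$, a mass that concentrates at the boundary and converges to $\varphi'(0)\,\eta_{u}(s,0)\int_{0}^{\infty}h''(y)\,dy=-\varphi'(0)\,\eta_{u}(s,0)$. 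Feeding this limit back and invoking $(\star)$ once more reproduces exactly the boundary term that $(\star)$ predicts, so the limiting identity is \eqref{weak} for the general $\varphi$.

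I expect the main obstacle to be the rigorous justification of this concentration limit and, above all, the interchange of $\lim_{\eps\to 0}$ with the time integral $\int_{0}^{t}(\cdot)\,ds$. Pointwise in $s$ the convergence $\int_{0}^{\infty}h''(y)\eta_{u}(s,\eps y)\,dy\to\eta_{u}(s,0)\int_{0}^{\infty}h''(y)\,dy$ uses only the continuity of $\eta_{u}(s,\cdot)$ at $0$; to dominate it uniformly in $\eps$ I would use Assumption~1 iii), namely $\lvert\eta_{u}\rvert\le c\,u$, together with the $L^{\infty}$ bound on $u$ established in Section~\ref{S2}, which bounds $\sup_{x}\lvert\eta_{u}(s,x)\rvert$ by a constant and makes dominated convergence applicable on $[0,t]$. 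The remaining ingredients---uniform convergence $\varphi_{\eps}\to\varphi$, vanishing of the far-field term, and finiteness of $\int\varphi'\eta_{u}'$---are routine once $(\star)$ is in hand.
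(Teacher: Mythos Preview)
Your argument is correct in outline, but it takes a more laborious route than the paper's and contains one justification that does not quite fit the hypotheses.

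\textbf{Comparison with the paper.} Both proofs approximate an arbitrary $\varphi$ by $\varphi_\eps$ with $\varphi_\eps'(0)=0$. The difference is \emph{where} the limit is taken. The paper constructs $\varphi_\eps(x)=\varphi(0)+c(\eps)+\int_0^x\chi_\eps(y)\varphi'(y)\,dy$ with a cutoff $\chi_\eps$ vanishing near $0$, so that $\varphi_\eps'=\chi_\eps\varphi'$ satisfies $|\varphi_\eps'|\le|\varphi'|$. It then integrates by parts \emph{first} (which is harmless because $\varphi_\eps'(0)=0$) to obtain \eqref{weak} for $\varphi_\eps$, and passes to the limit in that formulation, where only first derivatives appear; dominated convergence is then immediate from $|\varphi_\eps'|\le|\varphi'|\mathbf 1_I$ and $\eta_u'\in L^1([0,T]\times I)$. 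No boundary concentration ever arises. Your approach instead passes to the limit in the \emph{generalized} formulation \eqref{def-half}, where $\varphi_\eps''$ contains the blowing-up term $g_\eps''=\eps^{-1}h''(\cdot/\eps)$; you then have to identify its limit as a boundary trace and only afterwards invoke $(\star)$. This works, but the paper's ordering avoids that extra step entirely.

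\textbf{A gap in the domination.} Your stated dominant for the $s$-integral relies on ``the $L^\infty$ bound on $u$ established in Section~\ref{S2}''. That bound is proved only for the specific solution constructed there via symmetrisation; it is not part of the hypotheses of Proposition~\ref{PWeakDistri}, which concerns any couple $(u,\eta_u)$ with $\eta_u\in W^{1,1}_{\rm loc}$. You should instead dominate directly from the hypothesis: for a.e.\ $s$ and $\eps\le 1$, with $\mathrm{supp}\,h\subset[0,M]$,
\[
\Bigl|\int_0^\infty h''(y)\,\eta_u(s,\eps y)\,dy\Bigr|\le \|h''\|_{L^1}\Bigl(|\eta_u(s,0)|+\int_0^{M}|\eta_u'(s,z)|\,dz\Bigr),
\]
and $s\mapsto|\eta_u(s,0)|$ is integrable on $[0,T]$ because $|\eta_u(s,0)|\le\int_0^1|\eta_u(s,x)|\,dx+\int_0^1|\eta_u'(s,x)|\,dx$ (write $\eta_u(s,0)=\eta_u(s,x)-\int_0^x\eta_u'(s,z)\,dz$ and average over $x\in[0,1]$), which is finite by $\eta_u\in L^1$ and $\eta_u'\in L^1_{\rm loc}$. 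With this correction your argument goes through.
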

\begin{proof}
See Appendix.
\end{proof}


\section{The porous media equation on half-line with Neumann boundary
condition}

\label{S2} \bigskip

In this part of the paper we will study existence and uniqueness of the
generalized solution for equation \eqref{equ-half}. We also show the connection
with the notion of weak solutions.

This will be done using the known results on the whole line $\mathbb{R}$. 
To this purpose, we start extending the initial condition to the real line by the following construction. 
\newline
Let $\overline{u_{0}}\in (L^{1}\cap L^{\infty }) \left( \mathbb{R}\right) $ be
defined by 
\begin{equation}
\overline{u_{0}}\left( x\right) =\left\{ 
\begin{array}{cc}
\frac{1}{2}u_{0}\left( x\right) & ,x\geq 0 \\ 
\frac{1}{2}u_{0}\left( -x\right) & ,x<0.%
\end{array}%
\right.  \label{uzerobar}
\end{equation}%
and $\overline{\beta }:\mathbb{R\rightarrow R}$ by 
\begin{equation*}
\overline{\beta }\left( u\right) =\frac{1}{2}\beta \left( 2u\right) ,\quad
u\in \mathbb{R}\text{.}
\end{equation*}

We can now consider the corresponding porous media equation on the whole line%
\begin{equation}
\left\{ 
\begin{array}{ll}
\partial _{t}\overline{u}(t,x)=\frac{1}{2}\partial _{xx}^{2}\overline{\beta }%
\left( \overline{u}\right) (t,x) & :(t,x)\in \left( 0,T\right] \times 
\mathbb{R},\medskip \\ 
\overline{u}(0,.)=\overline{u}_{0}, & 
\end{array}%
\right.  \label{equ-all}
\end{equation}%
which, by Proposition 3.4 from \cite{BRR1} (see also 
\cite{BeC81})  has a unique solution in the
sense of distributions, i.e. there exists a unique couple $\left( \overline{u%
},\eta_{\bar u}\right) \in \left( L^{1}\cap L^{\infty }\right) \left( %
\left[ 0,T\right] \times \mathbb{R}\right) $ such that 
\begin{equation}
\int_{\mathbb{R}}\varphi \left( x\right) \overline{u}\left( t,x\right)
dx=\int_{\mathbb{R}}\varphi \left( x\right) \overline{u}_{0}\left( x\right)
dx+\frac{1}{2}\int_{0}^{t}\int_{\mathbb{R}}\varphi ^{\prime \prime }\left(
x\right)\eta_{\bar u}\left( s,x\right) dxds,  \label{def-all}
\end{equation}%
for all $\varphi \in C_{0}^{\infty }\left( \mathbb{R}\right) $ and 
\begin{equation*}
\eta_{\bar u}\left( t,x\right) \in \overline{\beta }\left( \overline{u%
}\left( t,x\right) \right) ,\text{ for }dt\otimes dx-~a.e.~\left( t,x\right)
\in \left[ 0,T\right] \times \mathbb{R}.
\end{equation*}


\begin{remark}
\label{R3} Since $\overline{u_{0}}$\ is even, the solution $\overline{u}$ to
equation (\ref{equ-all}) and ${\eta}_{\bar u}$ are also even. In fact, by
applying \eqref{def-all} to $x\mapsto \varphi \left( -x\right) $ and making
then the change of variables $x\mapsto -x$, we show that $(t,x)\mapsto 
\overline{u}\left( t,-x\right) $ is also a solution. 
The result follows by uniqueness of \eqref{equ-all}. 
\end{remark}

\begin{proposition}
\label{P4} We define $v, \eta_v:[0,T] \times \R_+$, setting $v(t,x)=2\overline{u}\left( t,x\right) $ and $\eta
_{v}(t,x) = 2 \eta_{\bar u}(t,x), \forall (t,x) \in [0,T] \times \R_+.$
The couple $\left( v,\eta _{v}\right) $ is a
generalized solution to equation (\ref{equ-half}) in the sense of Definition \ref{def1}.
\end{proposition}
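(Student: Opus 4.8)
The plan is to transfer the whole-line distributional identity \eqref{def-all} to the half-line identity \eqref{def-half} by exploiting the evenness of $\overline{u}$ and $\eta_{\bar u}$ recorded in Remark \ref{R3}. First I would dispose of the inclusion and integrability requirements of Definition \ref{def1}, which are immediate: since $\eta_{\bar u}(t,x)\in\overline{\beta}(\overline{u}(t,x))=\frac{1}{2}\beta(2\overline{u}(t,x))$ as graphs, multiplying through by $2$ gives $\eta_v=2\eta_{\bar u}\in\beta(2\overline{u})=\beta(v)$ for $dt\otimes dx$-a.e. $(t,x)$; moreover $v,\eta_v\in L^{1}([0,T]\times\mathbb{R}_+)$ because $\overline{u},\eta_{\bar u}\in(L^{1}\cap L^{\infty})([0,T]\times\mathbb{R})$.

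The heart of the matter is the integral identity. I would fix $\varphi\in C_{0}^{\infty}(\mathbb{R}_+)$ with $\varphi'(0)=0$ and form its even extension $\psi(x):=\varphi(|x|)$. The condition $\varphi'(0)=0$ is exactly what makes $\psi$ of class $C^{2}$: it forces the left and right limits of $\psi'$ at the origin to agree (both equal to $0$), while $\psi''$ automatically matches $\varphi''$ from both sides. The function $\psi$ is then even, $C^2$, and compactly supported, with $\psi|_{\mathbb{R}_+}=\varphi$ and $\psi''|_{\mathbb{R}_+}=\varphi''$. For such an even test function the three terms of \eqref{def-all} fold onto the half-line: writing $\int_{\mathbb{R}}=2\int_{0}^{\infty}$ for even integrands and using $v=2\overline{u}$, $2\overline{u_0}=u_0$ on $\mathbb{R}_+$ (from \eqref{uzerobar}), and $\eta_v=2\eta_{\bar u}$, the whole-line identity collapses termwise into precisely \eqref{def-half} for $\varphi$.

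The one genuine obstacle is that $\psi$ is merely $C^{2}$, whereas \eqref{def-all} is available a priori only for test functions in $C_{0}^{\infty}(\mathbb{R})$; the even reflection of a general $\varphi$ with only $\varphi'(0)=0$ need not be smooth. I would bridge this gap by mollification: choosing an even mollifier $\rho$ and setting $\psi_n:=\psi*\rho_n$, each $\psi_n$ lies in $C_{0}^{\infty}(\mathbb{R})$, stays even, has support inside a fixed compact set, and satisfies $\psi_n\to\psi$ and $\psi_n''\to\psi''$ uniformly (this last convergence using exactly that $\psi\in C^{2}$). Applying \eqref{def-all} to each $\psi_n$ and letting $n\to\infty$, the left-hand side and the initial-data term converge by uniform convergence tested against $\overline{u}(t,\cdot),\overline{u_0}\in L^{1}$, while the double integral converges by dominated convergence, its integrands being dominated by $C\,|\eta_{\bar u}|\,\mathbf{1}_{K}$ with $\eta_{\bar u}\in L^{1}([0,T]\times\mathbb{R})$. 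This establishes \eqref{def-all} for the $C^{2}$ function $\psi$, after which the folding described above yields \eqref{def-half} for the chosen $\varphi$, completing the verification that $(v,\eta_v)$ is a generalized solution.
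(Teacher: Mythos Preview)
Your proposal is correct and follows essentially the same route as the paper: verify the inclusion $\eta_v\in\beta(v)$ and the $L^1$ bounds, form the even extension of the test function, mollify with an even kernel to obtain a genuine $C_0^\infty(\mathbb{R})$ test function, apply \eqref{def-all}, and pass to the limit using evenness to fold back onto the half-line. The only differences are organizational --- the paper separates out a ``Step I'' for test functions that already extend smoothly and then reduces to it, whereas you work directly with the mollified sequence --- and you are slightly more explicit than the paper in noting that the even extension is merely $C^2$ and that this is precisely what guarantees $\psi_n''\to\psi''$ uniformly.
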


\begin{proof} 
Since 
\begin{equation*}
2\overline{\beta }\left( \overline{u}\right) =\beta \left( 2\overline{u}%
\right) =\beta \left( v\right), 
\end{equation*}
we first observe that $\eta _{v}(t,x) \in \beta(v(t,x)) dt dx$ a.e.
Moreover $\eta _{v}$ belongs to $L^1([0,T] \times \R_+)$ since 
 $\eta _{\bar u}$ belongs to $L^1([0,T] \times \R)$. It remains to prove 
that
 $\left( v,\eta_v \right)$
satisfies (\ref{def-half})
for all $\varphi \in C_{0}^{\infty }\left( \mathbb{R}_{+}\right) $ such that 
$\varphi ^{\prime }\left( 0\right) =0.$

\bigskip

\textit{Step I}

First we prove that (\ref{def-half}) is true for all test functions $\varphi
\in C_{0}^{\infty }\left( \mathbb{R}_{+}\right) $ with $\varphi ^{\prime
}\left( 0\right) =0$ and which extend to an even smooth function $\bar
\varphi$ with compact support on $\mathbb{R}$. 
By (\ref{def-all}) we have 
\begin{equation*}
\int_{\mathbb{R}}\overline{\varphi }\left( x\right) \overline{u}\left(
t,x\right) dx=\int_{\mathbb{R}}\overline{\varphi }\left( x\right) \overline{u%
}_{0}\left( x\right) dx+\frac{1}{2}\int_{0}^{t}\int_{\mathbb{R}}\overline{%
\varphi }^{\prime \prime }\left( x\right)\eta_{\bar u}\left(
s,x\right) dxds.
\end{equation*}

Since $\overline{\varphi },$ $\overline{u}_{0}$ and $\overline{u}(t,\cdot)$, for any $t \in [0,T]$ are even
functions, we get, for the first two terms, that 
\begin{equation*}
\int_{\mathbb{R}}\overline{\varphi }\left( x\right) \overline{u}\left(
t,x\right) dx=\int_{0}^{\infty } \varphi \left( x\right) 2%
\overline{u}\left( t,x\right) dx=\int_{0}^{\infty } \varphi \left(
x\right) v\left( t,x\right) dx,
\end{equation*}%
\begin{equation*}
\int_{\mathbb{R}}\overline{\varphi }\left( x\right) \overline{u}_{0}\left(
x\right) dx=\int_{0}^{\infty }\overline{\varphi }\left( x\right) 2\overline{u%
}_{0}\left( x\right) dx=\int_{0}^{\infty }\overline{\varphi }\left( x\right)
u_{0}\left( x\right) dx.
\end{equation*}%
Since $\eta_v = 2 \eta_{\bar u}$
we  obtain%
\begin{eqnarray*}
\int_{\mathbb{R}} \bar \varphi^{\prime \prime }\left( x\right) 
\eta_{\bar u}\left( s,x\right) dx &=&\int_{0}^{\infty }\overline{%
\varphi }^{\prime \prime }\left( x\right) 2 \eta_{\bar u}\left(
s,x\right) dx \\
&=&\int_{0}^{\infty } \varphi^{\prime \prime }\left( x\right)
\eta _{v}\left( s,x\right) dx,\ s\in \lbrack 0,T].
\end{eqnarray*}%
This proves \eqref{equ-half} for the restricted class of $\varphi $ which
extend to an even smooth function on $\mathbb{R}$ with compact support.

\bigskip

\textit{Step II}

Now we can prove the general statement.

Let $\varphi \in C_{0}^{\infty }\left( \mathbb{R}_{+}\right) $ such that $%
\varphi ^{\prime }\left( 0\right) =0.$ We extend it to an even function 
\begin{equation*}
\overline{\varphi }\left( x\right) =\left\{ 
\begin{array}{cc}
\varphi \left( x\right) & : x\geq 0 \\ 
\varphi \left( -x\right) & : x<0,%
\end{array}%
\right.
\end{equation*}%
which has compact support, but it does not belong necessarily to $%
C_{0}^{\infty }\left( \mathbb{R}\right).$ In order to have a proper test
function for evaluating it in (\ref{def-all}), we need to convolute it with
a mollifier.

Let $\rho \in C_{0}^{\infty }\left( \mathbb{R}\right) $ be such that $\rho
\geq 0$ for $\left\vert x\right\vert \geq 1,$ $\rho \left( x\right) =\rho
\left( -x\right) $ and $\int_{\mathbb{R}}\rho \left( x\right) dx=1.$ For an
example of such function see e.g. \cite{barbu-book}.

We set $\rho _{\varepsilon }\left( x\right) =\dfrac{1}{\varepsilon }\rho
\left( \dfrac{x}{\varepsilon }\right) $ as a mollifier and we take the
regularization 
\begin{eqnarray*}
\overline{\varphi }_{\varepsilon }\left( x\right) &=&\int_{\mathbb{R}}%
\overline{\varphi }\left( x\right) \rho _{\varepsilon }\left( x-y\right)
dy\medskip \\
&=&\int_{\mathbb{R}}\overline{\varphi }\left( x-\varepsilon y\right) \rho
\left( y\right) dy,\quad \forall x\in \mathbb{R}\text{.}
\end{eqnarray*}

It is well-known that $\overline{\varphi }_{\varepsilon }\in C_{0}^{\infty
}\left( \mathbb{R}\right) $ and we can check that $\overline{\varphi }
_{\varepsilon }$ is also even. Indeed we have 
\begin{eqnarray*}
\overline{\varphi }_{\varepsilon }\left( -x\right)  &=&\int_{\mathbb{R}}%
\overline{\varphi }\left( -x-\varepsilon y\right) \rho \left( y\right) dy \\
&=&\int_{\mathbb{R}}\overline{\varphi }\left( x+\varepsilon y\right) \rho
\left( y\right) dy \\
&=&\int_{\mathbb{R}}\overline{\varphi }\left( x-\varepsilon y\right) \rho
\left( - y\right) dy=\overline{\varphi }_{\varepsilon }\left( x\right) .
\end{eqnarray*}

By \textit{Step I} we get 
\begin{equation*}
\int_{0}^{\infty }\overline{\varphi }_{\varepsilon }\left( x\right) v\left(
t,x\right) dx=\int_{0}^{\infty }\overline{\varphi }_{\varepsilon }\left(
x\right) u_{0}\left( x\right) dx+\frac{1}{2}\int_{0}^{t}\int_{0}^{\infty
}\left( \overline{\varphi }_{\varepsilon }\right) ^{\prime \prime }\left(
x\right) \eta _{v}\left( s,x\right) dxds.
\end{equation*}

Since $\overline{\varphi }_{\varepsilon }\left\vert _{\mathbb{R}_{+}}\right.
\in C_{0}^{\infty }\left( \mathbb{R}_{+}\right) $ we have 
\begin{equation*}
\left( \overline{\varphi }_{\varepsilon }\right) ^{\prime \prime }\left(
x\right) =\int_{\mathbb{R}}\overline{\varphi }^{\prime \prime }\left(
x-\varepsilon y\right) \rho \left( y\right) dy=\left( \overline{\varphi }%
^{\prime \prime }\right) _{\varepsilon }\left( x\right) ,\quad \forall x\in 
\mathbb{R}_{+};
\end{equation*}%
 then we can pass to the limit for $\varepsilon \rightarrow 0$ and
conclude the proof.
\end{proof}

\bigskip
\begin{corollary} \label{RWeakDistri}
Under Assumptions 1. and 2., the generalized solution of \eqref{equ-half}
is also a weak solution.
\end{corollary}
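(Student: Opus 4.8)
The plan is to deduce the statement from Proposition \ref{PWeakDistri}, whose only non-trivial hypothesis is the spatial Sobolev regularity $\eta_u \in W^{1,1}_{loc}([0,T]\times\R_+)$ of the $\beta$-selection. Indeed, once we know that the generalized solution admits a selection $\eta_u(t,x)\in\beta(u(t,x))$ lying in $W^{1,1}_{loc}([0,T]\times\R_+)$, Proposition \ref{PWeakDistri} directly upgrades it to a weak solution in the sense of Definition \ref{DWeak}. So the whole argument reduces to verifying this regularity under Assumptions 1 and 2.

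First I would identify the generalized solution explicitly. By the construction of Proposition \ref{P4} (and the uniqueness announced in the introduction, which makes ``the'' generalized solution well defined), the generalized solution of \eqref{equ-half} is the couple $(v,\eta_v)$ with $v=2\bar u$ and $\eta_v=2\eta_{\bar u}$, where $(\bar u,\eta_{\bar u})$ is the unique solution of the whole-line equation \eqref{equ-all}. Consequently the required regularity of $\eta_v$ on $[0,T]\times\R_+$ is equivalent to the same regularity of $\eta_{\bar u}$ on $[0,T]\times\R$, and the problem is transferred to the whole-line setting, where a developed theory is available.

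Next I would invoke the whole-line regularity results. Under Assumption 2 i), $\beta$ is non-degenerate, so $\Phi\geq c_0>0$ and \eqref{equ-all} is uniformly non-degenerate parabolic; here $\overline\beta$ is continuous and strictly increasing, the selection coincides with $\eta_{\bar u}=\overline\beta(\bar u)$, and classical parabolic regularity yields a locally integrable spatial derivative, hence $\eta_{\bar u}\in W^{1,1}_{loc}([0,T]\times\R)$. Under the degenerate Assumptions 2 ii) or 2 iii), the same $W^{1,1}_{loc}$ regularity should be extracted from the results of \cite{BRR1, BRR2} and the refinements in \cite{BCR2, BCR3}: the local bounded variation of $u_0$ imposed in 2 ii), respectively the strict monotonicity after a zero in 2 iii), is precisely what propagates into a spatial bounded variation bound on $\eta_{\bar u}$. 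I would also note that reflecting $u_0$ into the even function $\overline{u_0}$ preserves local bounded variation, so these hypotheses transfer to the data $\overline{u_0}$ and the results apply to $\bar u$.

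The main obstacle is this last step, namely establishing $\eta_{\bar u}\in W^{1,1}_{loc}$ in the degenerate regime, where $\overline\beta$ may be flat on a whole interval and $\bar u$ need not be continuous, so the naive estimate on $\partial_x\overline\beta(\bar u)$ is unavailable. I would handle it by the standard approximation scheme: regularize $\overline\beta$ into a strictly increasing non-degenerate $\overline\beta_\eps$, solve the associated non-degenerate equation, and derive $L^1_{loc}$ bounds on $\partial_x\overline\beta_\eps(\bar u_\eps)$ that are uniform in $\eps$ using the bounded-variation control on the data (the non-degenerate case already disposes of each approximant). Passing to the limit and using lower semicontinuity of the total variation then yields the bound for the limiting selection $\eta_{\bar u}$. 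Once $\eta_v=2\eta_{\bar u}\in W^{1,1}_{loc}([0,T]\times\R_+)$ is in force, Proposition \ref{PWeakDistri} applies and concludes the proof.
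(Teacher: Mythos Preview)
Your strategy is exactly the paper's: identify the generalized solution with the restriction of $(2\bar u,2\eta_{\bar u})$ from the whole-line problem \eqref{equ-all}, establish $\eta_{\bar u}\in W^{1,1}_{\rm loc}([0,T]\times\R)$, and then invoke Proposition \ref{PWeakDistri}. The only divergence is in how the whole-line regularity is obtained. The paper does not split into the non-degenerate and degenerate sub-cases of Assumption 2, nor does it run an approximation scheme; it simply cites Proposition 4.5 a) of \cite{BRR2}, which under Assumptions 1 and 2 gives the stronger conclusion $\eta_{\bar u}(t,\cdot)\in H^1(\R)$ for a.e.\ $t$ together with $\int_{[0,T]\times\R}|\eta_{\bar u}'|^2\,dt\,dx<\infty$, from which $W^{1,1}_{\rm loc}$ follows by Cauchy--Schwarz on compact sets. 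Your case analysis and $\eps$-regularization sketch are therefore unnecessary here and, as written, would require further work to be rigorous (in particular, the ``classical parabolic regularity'' you invoke in the non-degenerate case does not apply directly since $\beta$ may be discontinuous). Replacing that discussion with the single citation yields the paper's proof verbatim.
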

\begin{proof}
Let $(\bar u, \eta_{\bar u})$ be the solution of \eqref{equ-all}, i.e. on the real line
with initial condition $\bar u_0$ as in \eqref{uzerobar}.
Under Assumptions 1. and 2., Proposition 4.5 a) of \cite{BRR2} says that
for a.e. $t \in [0,T]$, $\eta_u(t, \cdot) \in H^1(\R)$ and
$ \int_{[0,T] \times \R} \eta_u(t,x)^2 dt dx < \infty.$
This implies that for every compact real interval $K$,  
$\eta_{\bar u}' \in L^1([0,T] \times K)$.  In particular 
$\eta_{\bar u} \in W^{1,1}_{\rm loc} ([0,T] \times \R)$.
By the proof of Proposition \ref{P5}, the solution
$(u, \eta_u)$ equals the restriction of $(2 \bar u, 2\bar \eta)$ to $\R_+$.   
This shows that  $\eta_{u})$ belongs to $W^{1,1}_{\rm loc} ([0,T] \times \R_+)$.
By Proposition \ref{PWeakDistri}, $u$ is also a weak solution.
\end{proof}

\begin{proposition}
\label{P5} Equation (\ref{equ-half}) has a unique generalized solution in
the sense of Definition \ref{def1}.
\end{proposition}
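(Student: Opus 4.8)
The existence of a generalized solution has already been produced in Proposition \ref{P4}, where the couple $(v,\eta_v)=(2\bar u,2\eta_{\bar u})$ was shown to solve \eqref{equ-half}. Hence the real content of the statement is uniqueness, and the plan is to reduce it to the uniqueness of the distributional solution of the whole-line equation \eqref{equ-all}, which is available from Proposition 3.4 of \cite{BRR1} (ultimately the $L^1$-uniqueness of \cite{BrC79}). The guiding idea is that even reflection sets up a correspondence between generalized solutions of \eqref{equ-half} on $\R_+$ and even distributional solutions of \eqref{equ-all} on $\R$, and that the Neumann boundary condition, encoded through the restriction $\varphi'(0)=0$ on admissible test functions, is precisely what makes even extensions legitimate.

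First I would start from an arbitrary generalized solution $(u,\eta_u)$ of \eqref{equ-half} and form its even extension
\begin{equation*}
\bar u(t,x)=\tfrac12 u(t,\vert x\vert),\qquad \bar\eta(t,x)=\tfrac12\eta_u(t,\vert x\vert),\qquad (t,x)\in[0,T]\times\R.
\end{equation*}
Both belong to $L^1([0,T]\times\R)$ since $u,\eta_u\in L^1([0,T]\times\R_+)$, and at $t=0$ one recovers $\bar u(0,\cdot)=\bar u_0$ from \eqref{uzerobar}. I would then verify the graph inclusion $\bar\eta\in\bar\beta(\bar u)$: for $x\ge 0$ one has $2\bar u=u$, so $\bar\beta(\bar u)=\tfrac12\beta(2\bar u)=\tfrac12\beta(u)\ni\tfrac12\eta_u=\bar\eta$, and the case $x<0$ follows by symmetry. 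Thus $(\bar u,\bar\eta)$ is a legitimate candidate solution for \eqref{equ-all}.

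The heart of the argument is to check that $(\bar u,\bar\eta)$ satisfies the distributional formulation \eqref{def-all} against every $\psi\in C_0^\infty(\R)$. I would split $\psi=\psi_e+\psi_o$ into its even and odd parts. Because $\bar u(t,\cdot)$, $\bar u_0$ and $\bar\eta(s,\cdot)$ are even while $\psi_o$ and $\psi_o''$ are odd, each of the three integrals in \eqref{def-all} tested against $\psi_o$ vanishes, so it suffices to treat $\psi_e$. Being even, $\psi_e$ satisfies $\psi_e'(0)=0$, whence $\varphi:=\psi_e\vert_{\R_+}\in C_0^\infty(\R_+)$ is an admissible test function for Definition \ref{def1}. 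Folding each $\int_\R$ onto $2\int_0^\infty$ by evenness, with the factor $\tfrac12$ in $\bar u,\bar u_0,\bar\eta$ compensating the doubling, identity \eqref{def-all} for $(\bar u,\bar\eta)$ against $\psi_e$ collapses exactly into identity \eqref{def-half} for $(u,\eta_u)$ against $\varphi$, which holds by hypothesis. Hence $(\bar u,\bar\eta)$ is a distributional solution of \eqref{equ-all}.

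Finally, by the whole-line uniqueness recalled above, $\bar u$ — and therefore $u=2\bar u\vert_{\R_+}$ — is uniquely determined, giving the claim. The step requiring the most care is the test-function reduction: one must confirm that the Neumann constraint $\varphi'(0)=0$ is exactly the condition making even extensions admissible and that the odd components genuinely drop out of every term. A secondary, more routine point is to ensure that $(\bar u,\bar\eta)$ lies in the class for which \cite{BRR1} asserts uniqueness; this is immediate here, since $u,\eta_u\in L^1$ force $\bar u,\bar\eta\in L^1([0,T]\times\R)$ and Assumption 1 (with $u_0\in L^\infty$ and $\vert\beta(u)\vert\le cu$) controls the relevant norms, so the cited $L^1$-uniqueness applies.
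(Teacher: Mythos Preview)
Your proof is correct and follows the same strategy as the paper: extend an arbitrary generalized solution on $\R_+$ by even reflection (scaled by $\tfrac12$), verify it solves \eqref{equ-all}, and invoke whole-line uniqueness. The only cosmetic difference is that you decompose an arbitrary $\psi\in C_0^\infty(\R)$ into its even and odd parts and argue the odd part drops out, whereas the paper directly forms the symmetrized test function $\psi(x)=\varphi(x)+\varphi(-x)$ on $\R_+$ from a given $\varphi\in C_0^\infty(\R)$; these are equivalent formulations of the same observation.
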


\begin{proof}
Existence has been the object of Proposition \ref{P4}, so we proceed now to
uniqueness. Let $\left( v,\eta _{v}\right) $ be a generalized solution of (%
\ref{equ-half}), i.e. for any $\varphi \in C_{0}^{\infty }\left( \mathbb{R}%
_{+}\right) $ such that $\varphi ^{\prime }\left( 0\right) =0$ we have 
\begin{equation}
\int_{0}^{\infty }\varphi \left( x\right) v\left( t,x\right)
dx=\int_{0}^{\infty }\varphi \left( x\right) u_{0}\left( x\right) dx+\frac{1%
}{2}\int_{0}^{t}\int_{0}^{\infty }\varphi ^{\prime \prime }\left( x\right)
\eta _{v}\left( s,x\right) dxds.  \label{def-half2}
\end{equation}%
We define $\bar{u}_{0}:\mathbb{R}\rightarrow \mathbb{R}$ by 
\begin{equation*}
\overline{u_{0}}\left( x\right) =\left\{ 
\begin{array}{cc}
\frac{1}{2}u_{0}\left( x\right)  & ,x\geq 0 \\ 
\frac{1}{2}u_{0}\left( -x\right)  & ,x<0%
\end{array}%
\right. 
\end{equation*}%
We extend the considered solution to $\left[ 0,T\right] \times \mathbb{R}$
setting 
\begin{equation*}
\overline{u}\left( t,x\right) =\left\{ 
\begin{array}{cc}
\frac{1}{2}v\left( t,x\right)  & ,x\geq 0 \\ 
\frac{1}{2}v\left( t,-x\right)  & ,x<0,%
\end{array}%
\right. 
\end{equation*}%
\begin{equation*}
\eta_{\bar u}\left( t,x\right) =\left\{ 
\begin{array}{cc}
\frac{1}{2} \eta _{v}\left( t,x\right)  & ,x\geq 0 \\ 
 \frac{1}{2} \eta _{v}\left( t,-x\right)  & ,x<0.%
\end{array}%
\right. 
\end{equation*}%
Obviously we  have 
\begin{equation*}
\eta_{\bar u}\left( t,x\right) \in \frac{1}{2} \beta \left( 2\overline{u}\left(
t,x\right) \right) = \bar \beta(\bar u(t,x)),\quad dt\otimes dx-a.e.~\left( t,x\right) \in \left[ 0,T%
\right] \times \mathbb{R}_{+}.
\end{equation*}%

We aim now at showing that $(\overline{u}, \overline{\eta_u})$  constructed
above, is a solution to equation (\ref{equ-all}) in order to use the
uniqueness for the equation on the whole line.
To this purpose we have to prove that 
\begin{equation*}
\int_{\mathbb{R}}\varphi \left( x\right) \overline{u}\left( t,x\right)
dx=\int_{\mathbb{R}}\varphi \left( x\right) \overline{u}_{0}\left( x\right)
dx+\frac{1}{2}\int_{0}^{t}\int_{\mathbb{R}}\varphi ^{\prime \prime }\left(
x\right)\eta_{\bar u}\left( s,x\right) dxds,
\end{equation*}%
for all $\varphi \in C_{0}^{\infty }\left( \mathbb{R}\right) $. 

Let us now fix $\varphi \in C_{0}^{\infty }\left( \mathbb{R}\right) $
and we can define $\psi :\mathbb{R}_{+}\rightarrow \mathbb{R}$ by $\psi
\left( x\right) =\varphi \left( x\right) +\varphi \left( -x\right) $.
Obviously $\psi \in C_{0}^{\infty }\left( \mathbb{R}_{+}\right) $ and $\psi
^{\prime }\left( 0\right) =0$, so it is a test function for (\ref{def-half2}%
). 
This gives 
\begin{equation*}
\int_{0}^{\infty }\psi \left( x\right) v\left( t,x\right)
dx=\int_{0}^{\infty }\psi \left( x\right) u_{0}\left( x\right) dx+\frac{1}{2}%
\int_{0}^{t}\int_{0}^{\infty }\psi ^{\prime \prime }\left( x\right) \eta
_{v}\left( s,x\right) dxds.
\end{equation*}%
The left-hand side above yields 
\begin{eqnarray*}
\int_{0}^{\infty }\psi \left( x\right) v\left( t,x\right) dx
&=&\int_{0}^{\infty }\left( \varphi \left( x\right) +\varphi \left(
-x\right) \right) v\left( t,x\right) dx \\
&=&\int_{0}^{\infty }\varphi \left( x\right) v\left( t,x\right)
dx+\int_{-\infty }^{0}\varphi \left( x\right) v\left( t,-x\right) dx \\
&=&\int_{\mathbb{R}}\varphi \left( x\right) 2\overline{u}\left( t,x\right)
dx,
\end{eqnarray*}%
by the obvious change of variable $x\mapsto -x$.
The same technique shows that 
\begin{equation*}
\int_{0}^{\infty }\psi \left( x\right) u_{0}\left( x\right) dx=\int_{\mathbb{%
R}}\varphi \left( x\right) 2\overline{u}_{0}\left( x\right) dx,
\end{equation*}%
and also 
\begin{equation*}
\int_{0}^{\infty }\psi ^{\prime \prime }\left( x\right) \eta _{v}\left(
s,x\right) dx= 2 \int_{\mathbb{R}}\varphi ^{\prime \prime }\left( x\right) 
{\eta }_{\bar u}\left( s,x\right) dx,\quad \text{for all }s\in \left[ 0,T%
\right] .
\end{equation*}

This implies
\begin{equation*}
2\int_{\mathbb{R}}\varphi \left( x\right) \overline{u}\left( t,x\right)
dx=2\int_{\mathbb{R}}\varphi \left( x\right) \overline{u_{0}}\left( x\right)
dx + 2 \int_{0}^{t}\int_{\mathbb{R}}\varphi ^{\prime \prime }\left(
x\right) {\eta }_{\bar u}\left( s,x\right) dxds,
\end{equation*}
where 
\begin{equation*}
\eta_{\bar u}\left( t,x\right) \in \bar \beta \left( \overline{u}\left(
t,x\right) \right) ,\quad a.e.~\left( t,x\right) \in \left[ 0,T\right]
\times \mathbb{R}_{+}
\end{equation*}
and we get that 
\begin{equation*}
\left\{ 
\begin{array}{l}
\partial_t \overline{u}\left( t,x\right) \in \frac{1}{2}\partial
_{xx}^{2}\left( \bar \beta \left(\overline{u}\left( t,x\right)
\right) \right) \quad :\left( t,x\right) \in \left( 0,T\right] \times 
\mathbb{R}, \\ 
\overline{u}\left( 0,x\right) =\overline{u_{0}}\left( x\right) .
\end{array}
\right. 
\end{equation*}

Finally, this shows that $\overline{u}$ is a solution to equation (\ref%
{equ-all}). As said at the beginning of the proof, the uniqueness for $v$  
 follows from uniqueness of \eqref{equ-all}.
\end{proof}

\begin{remark} \label{MassConv} 
 By Remark 1.6 of \cite{BRR2}, the solution $\bar u$ of \eqref{equ-all}
on the real line,
has mass conservation and it is always non-negative. 
Taking into account the construction of the generalized solution,
we also get a similar result for the generalized solution $v$ of \eqref{equ-half}.
This means the following.
\begin{enumerate}
\item $ v \ge 0$ a.e.
\item $\int_\R v(t,x) dx = 1, \ \forall t \in [0,1]$.
\end{enumerate}
This explains why the values of $\beta$ on $\R_-$ are not important, see 
Assumptions 1. and 2.
\end{remark}

\bigskip

\section{The  basic construction of the linear reflected diffusion}

\label{S3} \bigskip


In this section we are interested in the probabilistic representation of
equation (\ref{equ-half}), under Assumptions 1. and 2. from the introduction.
More precisely we aim at characterizing {\it all} the solutions of (\ref{equ-half}).

In this case we remind that we can write $\beta \left( u\right) =\Phi ^{2}\left(
u\right) u,$ for $\Phi $ being a non-negative   graph generated 
by a bounded function $\Phi$. 

We introduce now $\overline{\Phi }:\mathbb{R\rightarrow R}$ by $\overline{%
\Phi }\left( u\right) =\Phi \left( 2u\right).$ With this notation we have $%
\overline{\beta }\left( u\right) =u~\overline{\Phi }^{2}\left( u\right).$
Let $\bar{u}_{0}$ be as in \eqref{uzerobar}.

\bigskip

According to Theorem 4.4 in \cite{BRR1}, Theorem 5.4  in \cite{BRR2} taking into account the proof of 
Theorem 2.6 in \cite{BR},   equation \eqref{equ-all} admits a
probabilistic representation, in the sense that there is a solution $Y$ (in
law) of 
\begin{equation}  \label{stoch-all}
\left\{ 
\begin{array}{l}
Y_t \in Y_{0}+\int_{0}^{t}\bar \Phi \left( \overline{u}\left( s,Y_{s}\right)
\right) dW_{s}, t \in [0,T],  \\ 
\overline{u}\left( t,.\right) =\text{law density of }Y_{t}, t \in [0,T], \\
\bar u(0, \cdot) =\bar u_0,
\end{array}%
\right.  
\end{equation}%
for some Wiener process $\left( W_{s}\right) $ on some probability space. 

\bigskip
The precise meaning of the first line of \eqref{stoch-all} is the following:
\begin{equation} \label{EEE}
Y_t \in Y_{0}+\int_{0}^{t} \chi_{\overline{u}} \left( s,Y_{s}\right)
 dW_{s}, t \in [0,T],  \  
\chi_{\bar u}(t,y)  \in \bar \Phi(\bar u(t,y)), dt dy \ {\rm a.e.}
\end{equation}
In order not to loose the reader, without restriction of generality, 
we suppose in this section that $\Phi$ (and therefore $\bar \Phi$) is continuous on $\R_+$
so that we can write $\chi_u  = \Phi(u)$.
\begin{remark} \label{RPhiO}
We remind that we have defined $\Phi$ (and therefore $\bar \Phi$)  artificially at zero in the Introduction.
Observe that this has no influence in the probabilistic representation since 
$\int_0^T 1_{\{\bar u(s,Y_s) = 0\} }   \bar \Phi^2(\bar u(s,Y_s)) ds = 0 $ a.s. since
its expectation gives
$\int_0^T ds \int_\R 1_{\{\bar u(s,y) = 0\}} \bar u(s,y)  \bar \Phi^2(\bar 
u(s,y)) dy = 0.$ 
\end{remark}

Our purpose is to use this result in order to get a corresponding  one for the
porous media equation on the half-line with Neumann boundary condition. We
start with a preliminary result. \bigskip

\begin{lemma}
\label{P6} Let $(Y, \bar u)$ be 
 the solution of (\ref{stoch-all}). Let us denote by $\nu _{t},t\in
\lbrack 0,T]$, the marginal laws of $X_{t}=\left\vert Y_{t}\right\vert ,t\in
\lbrack 0,T]$. Then for all $t\in \lbrack 0,T]$, $\nu _{t}$ has a density
which is given by $v(t,\cdot )$ where $v=2~\overline{u}|_{[0,T]\times 
\mathbb{R}_{+}}$. 
\end{lemma}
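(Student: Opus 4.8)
The plan is to compute directly the law of $X_t = \left| Y_t \right|$ from the known law of $Y_t$, exploiting the symmetry of $\bar u$. By the second line of \eqref{stoch-all}, for each fixed $t$ the random variable $Y_t$ admits $\bar u(t,\cdot)$ as law density; moreover, by Remark \ref{R3}, the function $x \mapsto \bar u(t,x)$ is even. These two facts are the only inputs needed, so the argument is essentially a change of variables rather than anything requiring the stochastic structure of $Y$.

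First I would fix an arbitrary bounded Borel function $f : \R_+ \to \R$ and evaluate $\E[f(X_t)] = \E[f(\left|Y_t\right|)]$ against the density of $Y_t$:
\begin{equation*}
\E\left[f(\left|Y_t\right|)\right] = \int_{\R} f(\left|y\right|)\, \bar u(t,y)\, dy.
\end{equation*}
Next I would split the integral over $\R_-$ and $\R_+$ and perform the change of variable $y \mapsto -y$ on the negative half-line, which turns it into
\begin{equation*}
\int_0^\infty f(y)\, \bar u(t,y)\, dy + \int_0^\infty f(y)\, \bar u(t,-y)\, dy.
\end{equation*}
Using the evenness $\bar u(t,-y) = \bar u(t,y)$ from Remark \ref{R3}, the two terms combine into $\int_0^\infty f(y)\, 2\bar u(t,y)\, dy = \int_0^\infty f(y)\, v(t,y)\, dy$. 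Since $f$ is an arbitrary bounded Borel function on $\R_+$, this identifies $v(t,\cdot) = 2\,\bar u|_{[0,T]\times\R_+}$ as the density of $\nu_t$, which is exactly the claim.

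I would conclude with a brief consistency check that $v(t,\cdot)$ is indeed a probability density on $\R_+$: since $\bar u(t,\cdot)$ is even with total mass $1$ (mass conservation of the whole-line solution, cf. Remark \ref{MassConv}), one has $\int_0^\infty \bar u(t,x)\,dx = \frac{1}{2}$, hence $\int_0^\infty v(t,x)\,dx = 1$. There is really no serious obstacle here: the statement is a direct transfer of the reflection identity $X = \left|Y\right|$ through the even density $\bar u$, and the only genuine ingredient is the evenness established in Remark \ref{R3}, itself a consequence of the uniqueness for \eqref{equ-all}. The main thing to keep honest is the measurability/integrability bookkeeping so that the change of variables and the splitting of the integral are legitimate, but this is immediate once one works with $f$ bounded Borel and recalls $\bar u(t,\cdot) \in L^1 \cap L^\infty(\R)$.
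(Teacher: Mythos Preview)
Your proof is correct and follows essentially the same approach as the paper: both fix an arbitrary bounded Borel test function, express $\E[f(|Y_t|)]$ as an integral against the density $\bar u(t,\cdot)$, split over $\R_-$ and $\R_+$, change variables on the negative half, and invoke the evenness of $\bar u(t,\cdot)$ to combine the two pieces into $\int_0^\infty f(y)\,2\bar u(t,y)\,dy$. Your additional mass-consistency check is a harmless extra.
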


\begin{proof}
We remind that $\overline{u}$ is the law density of $Y_{t}$ from equation (%
\ref{stoch-all}). Let $\varphi :\mathbb{R}_{+}\rightarrow \mathbb{R}$ be a
bounded Borel function and $t\in \lbrack 0,T]$. We notice that 
\begin{eqnarray*}
\mathbb{E}\left( \varphi \left( X_{t}\right) \right)  &=&\mathbb{E}\left(
\varphi \left( \left\vert Y_{t}\right\vert \right) \right) =\int_{\mathbb{R}%
}\varphi \left( \left\vert y\right\vert \right) \overline{u}\left(
t,y\right) dy= \\
&=&\int_{\mathbb{-\infty }}^{0}\varphi \left( -y\right) \overline{u}\left(
t,y\right) dy+\int_{0}^{\infty }\varphi \left( y\right) \overline{u}\left(
t,y\right) dy \\
&=&\int_{0}^{\infty }\varphi \left( y\right) 2~\overline{u}\left( t,y\right)
dy;
\end{eqnarray*}%
since $\bar u(t,\cdot)$ is even, this  concludes the proof.


\end{proof}

\bigskip 

Let $v$ be as in Lemma \ref{P6}. We continue by investigating the stochastic
equation solved by $X_{t}=\left\vert Y_{t}\right\vert ,t\in \lbrack 0,T]$
where $Y$ is the solution of (\ref{stoch-all}).

By It\^{o}-Tanaka formula (see e.g. \cite{revuz-yor} Theorem (1.2) Chapter
VI) we get 
\begin{equation}
X_{t}=\left\vert Y_{t}\right\vert =\left\vert Y_{0}\right\vert
+\int_{0}^{t}sgn\left( Y_{s}\right) \bar \Phi \left( \overline{u}\left(
s,Y_{s}\right) \right) dW_{s}+L_{t}^{Y}\left( 0\right) ,  \label{ito}
\end{equation}%
where 
\[
sgn(x)=\left\{ 
\begin{array}{cc}
1 & ,x>0 \\ 
-1 & ,x<0 \\ 
0 & ,x=0%
\end{array}%
\right. 
\]%
and $(L_{t}^{Y}\left( 0\right) )$ is the local time of the semimartingale $Y$
at zero. Since $Y$ is a local martingale, this is characterized
 by%
\begin{equation} \label{ito10}
L_{t}^{Y}\left( 0\right) =\frac{1}{2}~\underset{\varepsilon \rightarrow 0}{%
\lim }\int_{0}^{t} \frac{1}{\varepsilon}
 1_{\left\{ Y_{s}\in \left( -\varepsilon ,\varepsilon
\right) \right\} }d\left\langle Y\right\rangle _{s},
\end{equation}
see e.g.  Corollary (1.9) Chapter 6 \cite{revuz-yor}. We define 
\[
B_{t}^{1}=\int_{0}^{t}sgn\left( Y_{s}\right) dW_{s},
\]%
so that 
\[
\left\langle B^{1}\right\rangle _{t}=\int_{0}^{t}(sgn)^{2}\left( Y_{s}\right)
ds=\int_{0}^{t}1_{\left\{ Y_{s}\neq 0\right\} }ds.
\]%
Possibly enlarging the probability space, let $B^{2}$ be an independent
Brownian motion of $W$. We set 
\begin{equation}
B:=B^{1}+\int_{0}^{\cdot }1_{\left\{ Y_{s}=0\right\} }dB_{s}^{2}.
\label{eB12}
\end{equation}%
Since $\langle W,B^{2}\rangle _{t}=0$, it follows that 
\[
\langle B\rangle _{t}=\int_{0}^{t}1_{\left\{ Y_{s}\neq 0\right\}
}ds+\int_{0}^{t}1_{\left\{ Y_{s} = 0\right\} }ds=t,\forall t\in \lbrack
0,T].
\]%
By L\'{e}vy characterization theorem of the Brownian motion, it follows that 
$B$ is a Wiener process.

\bigskip By (\ref{ito}), since $L^{Y}$ is a bounded variation process, we
show that 
\begin{equation}
\langle X\rangle _{t}=\int_{0}^{t}1_{\left\{ Y_{s}\neq 0\right\} }\overline{%
\Phi }^{2}(\bar{u}(s,Y_{s}))ds=\int_{0}^{t}1_{\left\{ Y_{s}\neq 0\right\}
}\Phi ^{2}(2\bar{u}(s,Y_{s}))ds  \label{ito1}
\end{equation}%
\begin{equation*}
=\int_{0}^{t}1_{\left\{ X_{s}\neq 0\right\} }\Phi ^{2}(v(s,X_{s}))ds,
\end{equation*}%
because $\bar{u}(s,y)=\dfrac{v}{2}(s,|y|),~s\in \lbrack 0,T],~y\in \mathbb{R}
$. Coming back to \eqref{ito10}, again by Corollary (1.9) Chapter VI of \cite{revuz-yor} and  (\ref{stoch-all}), we get
\begin{eqnarray*}
L_{t}^{Y}\left( 0\right)  &=&\underset{\varepsilon \rightarrow 0}{\lim }%
\frac{1}{2\varepsilon }\int_{0}^{t}1_{\left\{ Y_{s}\in \left( -\varepsilon
,\varepsilon \right) \right\} }d\left\langle Y\right\rangle _{s}\medskip  \\
&=&\underset{\varepsilon \rightarrow 0}{\lim }\frac{1}{2\varepsilon }%
\int_{0}^{t}1_{\left\{ Y_{s}\in \left( -\varepsilon ,\varepsilon \right)
\right\} }\overline{\Phi }^{2}\left( \overline{u}\left( s,Y_{s}\right)
\right) ds\medskip  \\
&=&\underset{\varepsilon \rightarrow 0}{\lim \frac{1}{2\varepsilon }}%
\int_{0}^{t}1_{\left\{ \left\vert Y_{s}\right\vert \in \left[ 0,\varepsilon
\right) \right\} }\Phi ^{2}\left( 2\overline{u}\left( s,Y_{s}\right) \right)
ds\medskip  \\
&=&\underset{\varepsilon \rightarrow 0}{\lim \frac{1}{2\varepsilon }}%
\int_{0}^{t}1_{\left\{ X_{s}\in \left[ 0,\varepsilon \right) \right\} }\Phi
^{2}\left( v\left( s,X_{s}\right) \right) ds\medskip  \\
&=&\underset{\varepsilon \rightarrow 0}{\lim \frac{1}{2\varepsilon }}%
\int_{0}^{t}1_{\left\{ X_{s}\in \left( 0,\varepsilon \right) \right\} }\Phi
^{2}\left( v\left( s,X_{s}\right) \right) ds\medskip  \\
&&\quad \quad \quad +\underset{\varepsilon \rightarrow 0}{\lim }\frac{1}{%
2\varepsilon }\int_{0}^{t}1_{\left\{ X_{s}=0\right\} }\Phi ^{2}\left(
v\left( s,X_{s}\right) \right) ds.
\end{eqnarray*}
because of Lemla \ref{P6}.
By (\ref{ito1}) it follows that 
\begin{eqnarray} \label{AAA5}
L_{t}^{Y}\left( 0\right)  &=&\underset{\varepsilon \rightarrow 0}{\lim }%
\frac{1}{2\varepsilon }\int_{0}^{t}1_{\left\{ X_{s}\in \left[ 0,\varepsilon
\right) \right\} }d\left\langle X\right\rangle _{s}  \nonumber \\
&& \\ 
&&\quad \quad \quad +\underset{\varepsilon \rightarrow 0}{\lim }\frac{1}{%
2\varepsilon }\int_{0}^{t}1_{\left\{ X_{s}=0\right\} }\Phi ^{2}\left(
v\left( s,X_{s}\right) \right) ds.  \nonumber
\end{eqnarray}

Now
\begin{equation}
\int_{0}^{t}1_{\left\{ X_{s}=0\right\} }\Phi ^{2}\left( v\left(
s,X_{s}\right) \right) ds=0,  \label{AAA}
\end{equation}%
because the expectation of the non-negative left-hand side gives 
\begin{equation*}
\int_{0}^{t}\int_{\{0\}}\Phi ^{2}\left( v(s,y)\right) v(s,y)dyds=0.
\end{equation*}

By (\ref{AAA}), \eqref{AAA5}  and taking also into account again 
 Corollary (1.9) 
Chapter VI of \cite%
{revuz-yor} we obtain that%
\begin{equation*}
L_{t}^{Y}\left( 0\right) =\frac{1}{2}L_{t}^{X}\left( 0\right) .
\end{equation*}

Going back to (\ref{ito}) we obtain 
\begin{equation}
\left\{ 
\begin{array}{l}
X_{t}=X_{0}+\int_{0}^{t}\Phi \left( v\left( s,X_{s}\right) \right)
dB_{s}^{1}+\dfrac{1}{2}L_{t}^{X}\left( 0\right), \\ 
v\left( t,.\right) =\text{law density of }X_{t}\text{, }t\in \left[ 0,T%
\right],  \\ 
v(0,.)=u_{0}.%
\end{array}%
\right.   \label{stoch-half}
\end{equation}%

Taking into account (\ref{eB12}) we obtain 
\begin{equation}
\left\{ 
\begin{array}{l}
X_{t}=X_{0}+\int_{0}^{t}\Phi \left( v\left( s,X_{s}\right) \right) dB_{s}+%
\dfrac{1}{2}L_{t}^{X}\left( 0\right), \\ 
v\left( t,.\right) =\text{law density of }X_{t},\\
v(0, \cdot) = u_0.
\end{array}%
\right.   \label{stoch-half1}
\end{equation}

This happens because $\int_{0}^{t}\Phi (v(s,X_{s}))dB_{s}^{2}=0$. In fact
its expectation gives $\E\left( 
\int_{0}^{T} \Phi ^{2}(v(s,X_{s}))ds\right)=0$   since for each 
$s\in \lbrack 0,T],$ $X_{s}$ has a density. The proof is now concluded.

\section{The probabilistic representation}
\label{S4}



A solution of (\ref{stoch-half1}) is the candidate for a
probabilistic representation of a solution to (\ref{stoch-half}). The
proposition below gives an illustration of this, even though this constitutes
the {\it easy} part of the probabilistic representation.

We suppose again the validity of Assumptions 1. and 2.

\begin{proposition}
\label{P7} Let $X$ be a solution in law  of%
\begin{equation}
\left\{ 
\begin{array}{l}
X_{t}=X_{0}+\int_{0}^{t}\chi _{v}\left( s,X_{s}\right) dB_{s}+\dfrac{1}{2}%
L_{t}^{X}\left( 0\right) , \\ 
v\left( t,.\right) =\text{law density of }X_{t}\text{, }t\in \left[ 0,T%
\right],  \\ 
\chi _{v}\left( s,x\right) \in \Phi \left( v\left( s,x\right) \right) ,\quad
ds\otimes dx~a.e.\\
X \ge 0,\\
v(0, \cdot) = u_0.%
\end{array}%
\right.   \label{stoch-half2}
\end{equation}

Then $v$ is the generalized solution  in the sense of Definition \ref{def1}
 to equation (\ref{equ-half}).
\end{proposition}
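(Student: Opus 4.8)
The plan is to apply It\^o's formula to $\varphi(X_t)$ for a fixed test function $\varphi \in C_0^\infty(\R_+)$ with $\varphi'(0)=0$, then take expectations and read off the marginal densities $v(t,\cdot)$ of $X_t$. Since $\varphi$ is the restriction to $\R_+$ of a smooth compactly supported function on $\R$ and $X_s\ge 0$ for all $s$, I can freely apply the classical It\^o formula to that extension. Using the decomposition in \eqref{stoch-half2} and the fact that $\frac{1}{2}L^X(0)$ has bounded variation, the quadratic variation is $d\langle X\rangle_s = \chi_v^2(s,X_s)\,ds$, so It\^o's formula yields
\begin{equation*}
\varphi(X_t) = \varphi(X_0) + \int_0^t \varphi'(X_s)\chi_v(s,X_s)\,dB_s + \frac{1}{2}\int_0^t \varphi'(X_s)\,dL_s^X(0) + \frac{1}{2}\int_0^t \varphi''(X_s)\chi_v^2(s,X_s)\,ds.
\end{equation*}

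The decisive point, and the place where the Neumann condition enters, is the local time term. Since $dL^X(0)$ is carried by $\{s: X_s=0\}$, the support property of the semimartingale local time gives $\int_0^t \varphi'(X_s)\,dL_s^X(0) = \varphi'(0)\,L_t^X(0) = 0$, precisely because the admissible test functions in Definition \ref{def1} satisfy $\varphi'(0)=0$. I would stress that this is exactly the probabilistic counterpart of the boundary condition in \eqref{equ-half}: reflection at the origin produces no contribution once the test function has vanishing derivative there.

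Next I would take expectations. Because $\Phi$ is bounded, $\chi_v$ is bounded, and $\varphi'$ has compact support, the stochastic integral is a genuine martingale with zero expectation. Using that $X_t$ has law density $v(t,\cdot)$ supported on $\R_+$, that $v(0,\cdot)=u_0$, and Fubini (justified since $\varphi''\chi_v^2$ is bounded and $v(s,\cdot)$ is a probability density, so $\int_0^\infty v(s,x)\,dx=1$), one obtains
\begin{equation*}
\int_0^\infty \varphi(x) v(t,x)\,dx = \int_0^\infty \varphi(x) u_0(x)\,dx + \frac{1}{2}\int_0^t\int_0^\infty \varphi''(x)\,\chi_v^2(s,x)\,v(s,x)\,dx\,ds.
\end{equation*}
I then set $\eta_v(s,x) := \chi_v^2(s,x)\,v(s,x)$. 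Since $\Phi$ is continuous in this section, $\chi_v = \Phi(v)$, hence $\eta_v = \Phi^2(v)\,v = \beta(v)$, giving the required membership $\eta_v(s,x)\in\beta(v(s,x))$; on the negligible set $\{v=0\}$ the product vanishes anyway. The bound $|\beta(u)|\le cu$ from Assumption 1 iii) together with mass conservation forces $\eta_v\in L^1([0,T]\times\R_+)$. The displayed identity is then exactly \eqref{def-half}, so $v$ is a generalized solution.

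I expect no serious obstacle. The only two points needing genuine care are the localisation of $dL^X(0)$ on $\{X_s=0\}$ (a standard property of local time, used to kill the boundary term via $\varphi'(0)=0$) and the interchange of expectation with the time integral; both are routine given the boundedness of $\Phi$ and the compact support of $\varphi$.
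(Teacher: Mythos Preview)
Your proposal is correct and follows essentially the same route as the paper: apply It\^o's formula to $\varphi(X_t)$, kill the local-time term using $\varphi'(0)=0$ together with the fact that $dL^X(0)$ is carried by $\{X_s=0\}$, take expectations, and identify $\eta_v=\chi_v^2 v\in\beta(v)$. The paper handles the local-time term by splitting $\int_0^t\varphi'(X_s)\,dL_s^X(0)$ into the pieces over $\{X_s=0\}$ and $\{X_s\neq 0\}$ and invoking a result from Karatzas--Shreve for the latter, whereas you invoke the support property of local time directly; these are the same fact stated two ways.
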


\begin{remark}\label{R77}

\begin{itemize}
\item[i)] Equation (\ref{stoch-half2}) is formalized by 
\begin{equation} \label{stoch-half3}
\left\{ 
\begin{array}{l}
X_{t}\in X_{0}+\int_{0}^{t}\Phi \left( v\left( s,X_{s}\right) \right) dB_{s}+%
\dfrac{1}{2}L_{t}^{X}\left( 0\right) , \\ 
v\left( t,.\right) =\text{law density of }X_{t}\text{, }t\in \left[ 0,T%
\right],
v(0,\cdot) = 0, \\
X \ge 0. \\
 \end{array}%
\right. 
\end{equation}

\item[ii)] If $\Phi $ is continuous    (\ref{stoch-half3}). coincide with (\ref{stoch-half1}).

\item[iii)] According to Corollary \ref{RWeakDistri} $v$ is also the weak solution 
of  (\ref{stoch-half}) in the sense of Definition \ref{DWeak}.

\end{itemize}
\end{remark}

\begin{proof}
Let $\varphi \in C_{0}^{\infty }\left( \mathbb{R}_{+}\right) $ be such that $%
\varphi ^{\prime }\left( 0\right) =0.$
By  It\^{o} formula we obtain 
\begin{eqnarray}
\varphi \left( X_{t}\right)  &=&\varphi \left( X_{0}\right)
+\int_{0}^{t}\varphi ^{\prime }\left( X_{s}\right) \chi _{v}\left(
s,X_{s}\right) dB_{s}  \label{ito2} \\
&&\quad +\frac{1}{2}\int_{0}^{t}\varphi ^{\prime \prime }\left( X_{s}\right)
\chi _{v}^{2}\left( s,X_{s}\right) ds+\int_{0}^{t}\varphi ^{\prime }\left(
X_{s}\right) dL_{s}^{X}\left( 0\right) .  \nonumber
\end{eqnarray}

The last term above gives
\[
\int_{0}^{t}\varphi ^{\prime }\left( X_{s}\right) dL_{s}^{X}\left( 0\right)
=\int_{0}^{t}1_{\left\{ X_{s}=0\right\} }\varphi ^{\prime }\left(
X_{s}\right) dL_{s}^{X}\left( 0\right) +\int_{0}^{t}1_{\left\{ X_{s}\neq
0\right\} }\varphi ^{\prime }\left( X_{s}\right) dL_{s}^{X}\left( 0\right) .
\]

The first integral of the right-hand side equals 
\[
\varphi ^{\prime }\left( 0\right) \int_{0}^{t}1_{\left\{ X_{s}=0\right\}
}dL_{s}^{X}\left( 0\right) =0,
\]%
by the assumptions on $\varphi .$

The absolute value of the second integral is bounded by 
\[
\left\Vert \varphi ^{\prime }\right\Vert _{\infty }\int_{0}^{t}1_{\left\{
X_{s}\neq 0\right\} }dL_{s}^{X}\left( 0\right) =0,
\]%
by Theorem 7.1 from \cite{KARSH}. So
we can conclude that $\int_{0}^{t}\varphi ^{\prime }\left( X_{s}\right)
dL_{s}^{X}\left( 0\right) =0.$

Now, by taking the expectation in (\ref{ito2}), for $t\in \left[ 0,T\right] $,
we obtain 
\begin{eqnarray*}
\int_{0}^{\infty }\varphi \left( x\right) v\left( t,x\right) dx
&=&\int_{0}^{\infty }\varphi \left( x\right) u_{0}\left( x\right) dx \\
&&+\frac{1}{2}\int_{0}^{t}\int_{0}^{\infty }\varphi ^{\prime \prime }\left(
x\right) v\left( s,x\right) \chi _{v}^{2}\left( s,x\right) dxds.
\end{eqnarray*}

Since $\beta \left( u\right) =u~\Phi ^{2}\left( u\right) ,$ setting $\eta
_{v}\left( s,x\right) =v\left( s,x\right) \chi _{v}^{2}\left( s,x\right) ,$
we have $\eta _{v}\left( s,x\right) \in \beta \left( v\left( s,x\right)
\right) ,$ $ds\otimes dx~a.e.$ and $v$ is a generalized solution. We remind
that $v$ is necessarily the unique solution of (\ref{equ-half}) and we can
conclude the proof.
\end{proof}

\bigskip

\begin{remark} \label{R10}
Any solution to equation 
\[
\left\{ 
\begin{array}{l}
dX_{t}\in \Phi \left( v\left( t,X_{t}\right) \right) dB_{t}+\frac{1}{2}%
dL_{t}^{X}\left( 0\right) , \\ 
v\left( t,.\right) =\text{law density of }X_{t}\text{, }t\in \left[ 0,T%
\right] , \\ 
v(0,.)=u_{0},%
\end{array}%
\right. 
\]%
solves also the reflecting Skorohod problem 
\begin{equation}
\left\{ 
\begin{array}{l}
dX_{t}\in \Phi \left( v\left( t,X_{t}\right) \right) dB_{t}+dK_{t}, \\ 
K\text{ is increasing}, \\ 
 X \ge 0, \int_{0}^{T}X_{s}dK_{s}=0,
\end{array}%
\right.   \label{refl}
\end{equation}%
with $L_{t}^{X}\left( 0\right) =2K_{t}.$

Indeed, $(L_{t}^{X}\left( 0\right)) $ is an increasing process, being a limit
of increasing processes (see Corollary 1.9 Chapter VI of \cite{revuz-yor}) and $%
\int_{0}^{t}1_{\left\{ X_{s}=0\right\} }dL_{s}^{X}\left( 0\right) =0$ by
Theorem 7.1 from \cite{KARSH}.

In particular showing the existence of a solution to \eqref{stoch-half3}, also establishes the  existence for equation (\ref{refl}).
\end{remark}

The consideration of the first part of Section 3 allows to state effectively the
existence of solutions for (\ref{stoch-half2}) which consists in a sort of
converse statement of Proposition \ref{P7}. 

\begin{theorem} \label{T11}
Under Assumptions 1 and 2 there is a solution in law of (\ref{stoch-half2}).
\end{theorem}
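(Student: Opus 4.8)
The plan is to reduce the half-line problem to the non-linear diffusion on the whole line that is already available, and then to build the reflected process by taking absolute values, i.e. by reversing the computation carried out in Section \ref{S3}. First I would invoke, under Assumptions 1 and 2, the existence in law of a solution $Y$ of the whole-line equation \eqref{stoch-all}: this is exactly what Theorem 4.4 of \cite{BRR1}, Theorem 5.4 of \cite{BRR2} and Theorem 2.6 of \cite{BR} deliver, since $\bar u_0 \in (L^1\cap L^\infty)(\mathbb{R})$ inherits the integrability of $u_0$ and $\bar\beta$ belongs to the same class of non-linearities. Reading \eqref{EEE}, this produces a Wiener process $W$ and a measurable selection $\chi_{\bar u}(s,y)\in\bar\Phi(\bar u(s,y))$ with $Y_t=Y_0+\int_0^t\chi_{\bar u}(s,Y_s)\,dW_s$, the law density of $Y_t$ being $\bar u(t,\cdot)$.

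Next I set $X_t:=|Y_t|$, so that the constraint $X\ge 0$ holds by construction and, by Lemma \ref{P6}, the marginal law of $X_t$ has density $v(t,\cdot)=2\,\bar u|_{[0,T]\times\mathbb{R}_+}$; in particular $v(0,\cdot)=2\,\bar u_0|_{\mathbb{R}_+}=u_0$, matching the initial datum in \eqref{stoch-half2}. The heart of the argument is then the It\^o--Tanaka computation of Section \ref{S3}: formula \eqref{ito} writes $X_t=|Y_0|+\int_0^t\mathrm{sgn}(Y_s)\chi_{\bar u}(s,Y_s)\,dW_s+L_t^Y(0)$; the local-time identities \eqref{AAA}--\eqref{AAA5}, together with \eqref{ito1}, collapse $L_t^Y(0)$ into $\frac{1}{2}L_t^X(0)$; the process $B$ assembled in \eqref{eB12} is a Brownian motion by L\'evy's characterization; and the stochastic integral against $B^2$ vanishes by the occupation-time argument of Remark \ref{RPhiO}. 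Taking $\chi_v$ to be the canonical non-negative value $\Phi(v)$, one reads off that $(X,B,\chi_v)$ solves \eqref{stoch-half2}, which is the assertion.

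The step I expect to be the main obstacle is discharging the temporary continuity assumption on $\Phi$ adopted throughout Section \ref{S3}, that is, treating the genuinely graph-valued (discontinuous $\beta$) case. The delicate point is that in the identity $\int_0^t\mathrm{sgn}(Y_s)\chi_{\bar u}(s,Y_s)\,dW_s=\int_0^t\chi_v(s,X_s)\,dB_s^1$ one needs $\chi_v(s,|Y_s|)=\chi_{\bar u}(s,Y_s)$ on $\{Y_s\neq 0\}$, whereas an arbitrary selection $\chi_{\bar u}$ need not be an even function of $y$. I would resolve this by using the consistency of the representation, $\eta_{\bar u}=\bar u\,\chi_{\bar u}^2$, which gives $\chi_{\bar u}^2(s,y)=\eta_{\bar u}(s,y)/\bar u(s,y)$ on $\{\bar u>0\}$; since both $\eta_{\bar u}$ and $\bar u$ are even by Remark \ref{R3}, the square $\chi_{\bar u}^2$ is even there, hence $\chi_{\bar u}(s,Y_s)=\Phi(v(s,|Y_s|))=\chi_v(s,X_s)$ off the set $\{\bar u(s,Y_s)=0\}$. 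The latter set carries no occupation mass, because $\mathbb{E}\int_0^T \mathbf{1}_{\{\bar u(s,Y_s)=0\}}\,ds=\int_0^T\!\!\int_{\mathbb{R}}\mathbf{1}_{\{\bar u(s,y)=0\}}\bar u(s,y)\,dy\,ds=0$, exactly as in \eqref{AAA}; so the two integrands agree $dt\otimes d\mathbb{P}$-a.e., the stochastic integrals coincide, and the continuity restriction is removed with the same canonical choice $\chi_v=\Phi(v)$.
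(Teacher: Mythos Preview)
Your proposal is correct and follows exactly the route the paper takes: the paper's proof of Theorem~\ref{T11} simply points back to the construction of Section~\ref{S3} (absolute value of the whole-line non-linear diffusion, It\^o--Tanaka, the Brownian motion $B$ of \eqref{eB12}, and the local-time identity $L^Y(0)=\tfrac12 L^X(0)$), adding only the remark that ``obvious adaptations'' handle the case where $\Phi$ is a genuine graph. Your last paragraph supplies precisely those adaptations---using the evenness of $\bar u$ and $\eta_{\bar u}$ from Remark~\ref{R3} to force the selection $\chi_{\bar u}$ to be even on $\{\bar u>0\}$, and killing the complement via the occupation-time argument---so you have in fact written out what the paper leaves implicit.
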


\begin{proof}




The proof is the object of Section \ref{S3} with some obvious
adaptations to the case when $\Phi$ was not continuous
and it has to be associated with a graph.

\end{proof}
\begin{remark} \label{R12} The conclusion of Theorem \ref{T11} is valid under
other settings of similar hypotheses (but different) as those in 
 Assumptions 1 and 2. For instance under the one of the following assumptions, 
there is a solution in law to \eqref{stoch-half2}.
\begin{itemize}
\item Suppose $\Phi$ to be continuous on $\R_+$ and Assumption 1. ii) 
 is valid. Then the conclusion of Theorem \ref{T11} 
still holds if in Assumption 1. we replace $u_0 \in L^1(\R) \cap L^\infty(\R)  $ with
$u_0 \in L^1(\R)$.
In particular,  there is a solution in law to    to \eqref{stoch-half2}.
Indeed by Theorem 2.6 of \cite{BR}, a probabilistic representation on 
the whole line takes place. This constitutes the replacement tool to Theorem 4.4 of \cite{BRR1} and 
of   Theorem 5.4 of \cite{BRR2}, mentioned in the lines before \eqref{stoch-all}. 
\item  Item iii) in Assumption 1. is somehow technical and it can be often relaxed
for instance when $\beta$ is non-degenerate and $\Phi$ is smooth. 
Typically, when $\Phi$ is of of class $C^3$, Lipschitz, $\beta$ is non-degenerate
and $u_0$ is absolutely continuous with derivative in $H^{2 + \alpha}$ for some $0 < \alpha < 1$.
Then there is a (even a strong) solution of   \eqref{stoch-all},
see for instance Proposition 2.2 in \cite{jourdainmeleard}. 
As for the  previous item, by the same proof as for Theorem \ref{T11},
there will be a solution  to \eqref{stoch-half2}.
\end{itemize}

\end{remark}

\section{Appendix}

{\bf Proof of Proposition \ref{PWeakDistri}.}

Let $ \varphi \in C_{0}^{\infty} (\mathbb{R}_{+})$. Operating by integration by parts,  (\ref{weak}) it is equivalent to 
\begin{eqnarray}
\int_{0}^{\infty }\varphi \left( x\right) u\left( t,x\right) dx
&=&\int_{0}^{\infty }\varphi \left( x\right) u_{0}\left( x\right) dx
\label{equiv} \\
&&-\int_{0}^{t}\varphi ^{\prime }\left( 0\right) \eta _{u}\left( s,0\right)
ds+\int_{0}^{t}\int_{0}^{\infty }\varphi ^{\prime \prime }\left( x\right)
\eta _{u}\left( s,x\right) dx ds.  \nonumber 
\end{eqnarray}

We observe that $\left( u,\eta _{u}\right) $ is a generalized solution if (\ref{equiv}) holds for every $\varphi \in C_{0}^{\infty }\left( \mathbb{R}_{+}\right) $ 
such that $\varphi ^{\prime}\left( 0\right) =0$.

We can easily see that if $\left( u,\eta _{u}\right) $ is a weak solution
then it is obviously also a generalized solution. 

In the other sense we assume that $(u,\eta _{u})$ be a generalized solution.
To show that it is also a weak solution we need to show that (\ref{weak})
holds for every $\varphi \in C_{0}^{\infty }\left( \mathbb{R}_{+}\right) .$
We know in fact that (\ref{equiv}) holds for every $\varphi \in C_{0}^{\infty
}\left( \mathbb{R}_{+}\right) $ such that $\varphi ^{\prime }\left( 0\right)
=0.$

Let $\varepsilon >0.$ We consider a sequence $\chi _{\varepsilon }\left(
x\right) :\mathbb{R}_{+}\rightarrow \left[ 0,1\right] $ smooth such that 
\[
\chi _{\varepsilon }\left( x\right) =\left\{ 
\begin{array}{ll}
0 & :x\in \left[ 0,\varepsilon \right)  \\ 
1 & :x>2\varepsilon .%
\end{array}%
\right. 
\]

We set 
\[
\varphi _{\varepsilon }\left( x\right) =\int_{0}^{x}\chi _{\varepsilon
}\left( y\right) \varphi ^{\prime }\left( y\right) dy + \varphi \left(
0\right) +c\left( \varepsilon \right) 
\]%
where%
\[
c\left( \varepsilon \right) =\int_{0}^{\infty }\left( 1-\chi _{\varepsilon
}\right) \left( y\right) \varphi ^{\prime }\left( y\right) dy.
\]

We note that $\varphi _{\varepsilon }$ is obviously smooth. Moreover it has
compact support since for $x>\max ({\rm  supp} \varphi )$ we have%
\[
\varphi _{\varepsilon }\left( x\right) =\int_{0}^{\infty }\chi _{\varepsilon
}\left( y\right) \varphi ^{\prime }\left( y\right) dy+\varphi (0)+c\left(
\varepsilon \right) =0.
\]

By assumption, 
(\ref{equiv}) holds with $\varphi $ replaced by $\varphi _{\varepsilon
}.$ Obviously $\varphi _{\varepsilon }\rightarrow \varphi $ pointwise and $%
\varphi _{\varepsilon }^{\prime }\rightarrow \varphi ^{\prime }$ in $%
L^{p}\left( \mathbb{R}\right) ,$ for all $p\geq 1.$

Since $\varphi _{\varepsilon }^{\prime }\left( 0\right) =0,$ (\ref{weak})
holds for $\varphi $ replaced by $\varphi _{\varepsilon }$, i.e. 
\begin{eqnarray}
\int_{0}^{\infty }\varphi _{\varepsilon }\left( x\right) u\left( t,x\right)
dx &=&\int_{0}^{\infty }\varphi _{\varepsilon }\left( x\right) u_{0}\left(
x\right) dx  \label{weak-aprox} \\
&&-\int_{0}^{t}\int_{0}^{\infty }\varphi _{\varepsilon }^{\prime }\left(
x\right) \eta _{u}^{\prime }\left( s,x\right) dxds.  \nonumber
\end{eqnarray}

We remark that $c(\varepsilon )\rightarrow 0$ for $\varepsilon \rightarrow 0$ and 
\[
\left\vert \varphi _{\varepsilon }\left( x\right) \right\vert \leq
\int_{0}^{\infty }\left\vert \varphi ^{\prime }\left( y\right) \right\vert
dy+\varphi \left( 0\right) + \underset{\varepsilon}{\sup } \ c(\varepsilon ).
\]
Let $t \in [0,T]$.
By the dominated convergence theorem the left-hand side of (\ref{weak-aprox})
(respectively the first integral in the right-hand side), converges to $%
\int_{0}^{\infty }\varphi \left( x\right) u\left( t,x\right) dx$ (respectively 
$\int_{0}^{\infty }\varphi \left( x\right) u_{0}\left( x\right) dx).$ 

We observe 
\[
\left\vert \varphi _{\varepsilon }^{\prime }\left( x\right) \right\vert \leq
\left\vert \varphi ^{\prime }\left( x\right) \right\vert \mathbf{1}_{I}(x),
\]%
$I$ being a compact interval including the support of $\varphi .$

Again by dominated convergence theorem the second integral in the right-hand
side of (\ref{weak-aprox}), converges to $\int_{0}^{t}\int_{0}^{\infty
}\varphi ^{\prime }\left( x\right) \eta _{u}^{\prime }\left( s,x\right) dxds.
$ This shows (\ref{weak}) for (\ref{equ-half}) and conclude the proof.


{\bf ACKNOWLEDGEMENTS.}
The work of the second name author
was supported by the
ANR Project  MASTERIE 2010 BLAN--0121--01.
The first part of the work was done during the stay of the authors  
at the Bernoulli Center of the EPFL Lausanne.

\bibliographystyle{plain}
\bibliography{ICFR_Bibliography}

\end{document}